\DeclareSymbolFont{largesymbols}{OMX}{yhex}{m}{n}
\DeclareMathAccent{\widehat}{\mathord}{largesymbols}{"62}
\newtheorem{theorem}{Theorem}[section]
\newtheorem{corollary}[theorem]{Corollary}
\newtheorem{lemma}[theorem]{Lemma}
\newtheorem{proposition}[theorem]{Proposition}
\newtheorem{question}[theorem]{Question}
\theoremstyle{definition}
\newtheorem{definition}[theorem]{Definition}
\newtheorem{remark}[theorem]{Remark}
\newtheorem{example}[theorem]{Example}
\DeclareMathOperator{\Ima}{im}
\newcommand{\mdim}{{\rm mdim}}
\newcommand{\Wdim}{{\rm Wdim}}
\newcommand{\ord}{{\rm ord}}
\newcommand{\id}{{\rm id}}
 \newcommand{\cB}{{\mathcal B}}
  \newcommand{\cD}{{\mathcal D}}
  \newcommand{\cF}{{\mathcal F}}
 \newcommand {\cU}{{\mathcal U}}
  \newcommand {\cV}{{\mathcal V}}
  \newcommand{\cW}{{\mathcal W}}
 \newcommand{\bR}{{\mathbb R}}
 \newcommand{\bZ}{{\mathbb Z}}
 \newcommand{\rM}{{\rm M}}
\begin{document}

\title{Conditional mean dimension}

\author{Bingbing Liang}

\address{\hskip-\parindent
B.L., Department of Mathematical Science, Soochow University, 
Suzhou 215006, China.\\
The Institute of Mathematics of the Polish Academy of Sciences,
ul. \'{S}niadeckich  8, Warsaw 00-656, Poland}
\email{bbliang@suda.edu.cn, bliang@impan.pl}

\subjclass[2020]{Primary 37B02, 54E45.}
\keywords{amenable group, conditional mean dimension, $G$-extension, dynamical embedding}

\date{June, 2021}

\begin{abstract}
We introduce some notions of conditional mean dimension for a factor map between two topological dynamical systems and discuss their properties. With the help of these notions, we obtain an inequality to estimate the mean dimension of an extension system. The conditional mean dimension for $G$-extensions are computed. We also exhibit some applications in the dynamical embedding problems.
\end{abstract}

\maketitle

\section{Introduction}

Let $\Gamma$ be a countable amenable group. By a {\it dynamical system} $\Gamma \curvearrowright X$, we mean a compact metrizable space $X$ associated with a continuous action of $\Gamma$. Suppose $\Gamma \curvearrowright Y$ is another dynamical system  and $\pi\colon X \to Y$ is a continuous $\Gamma$-equivariant surjective map, i.e. a {\it factor map} between $X$ and $Y$. In such a setting, we call $\Gamma \curvearrowright X$ an {\it extension system} and $\Gamma \curvearrowright Y$ a {\it factor system}.

Mean (topological) dimension is a newly-introduced dynamical invariant by Gromov \cite{Gromov99T}, which measures the average dimension information of dynamical systems based on the covering dimension for compact Hausdorff spaces. It plays a crucial role in the embedding problem of dynamical systems \cite{Gutman11, Gutman15, Gutman16, GLT16, GT14, LW00}.

Since each fiber $\pi^{-1}(y)$ is a closed subset of the ambient system $\Gamma \curvearrowright X$, taking advantage of the ambient action, we can also discuss the  mean dimension
$\mdim(\pi^{-1}(y), \Gamma)$ for the fiber $\pi^{-1}(y)$ (see Definition \ref{mdim def of closed sets}).
When computing the mean dimension of moduli spaces of Brody curves, Tsukamoto established an inequality to estimate the mean dimension of the extension system in terms of the mean dimension of the factor system \cite[Theorem 4.6]{Tsukamoto08}. Based on this inequality, he asked the following question \cite[Problem 4.8]{Tsukamoto08}:

\begin{question}\label{main interest}
For a factor map $\pi\colon X \to Y$, is it true that
$\mdim(X) \leq \mdim(Y)+\sup_{y \in Y} \mdim(\pi^{-1}(y), \Gamma)$?
\end{question}

Observe that as $\Gamma$ is trivial, the above inequality recovers as the classical Hurewicz's formula \cite[Theorem VI 7]{HW41}\cite[Theorem 1.12.4, 3.3.10]{Engelking95}:
$$\dim(X) \leq \dim(Y) +\sup_{y \in Y} \dim(\pi^{-1}(y)).$$
So Question \ref{main interest} can be regarded as a dynamical concern of the Hurewicz's inequality.

We can also consider Question \ref{main interest} in parallel with entropy theory.  Historically, for $\Gamma=\bZ$, in \cite{Bowen71}, Bowen estimated the topological entropy $h(X)$ of $\Gamma \curvearrowright X$ in terms of the entropy of fibers $h(\pi^{-1}(y), \Gamma)$ for $y \in Y$, i.e.
$$h(X) \leq h(Y)+\sup_{y \in Y} h(\pi^{-1}(y), \Gamma).$$
In  particular, this verifies a conjecture of \cite[Conjecture 5]{AKM65} concerning the entropy of a skew product system.
Later, some versions of conditional entropy $h(X|Y)$ relative to a factor $\Gamma \curvearrowright Y$ were introduced and the related variational principles were established \cite{Misiurewicz76, DS02, Yan15}. In particular, it is shown that
$$h(X|Y)=\sup_{y \in Y} h(\pi^{-1}(y), \Gamma)$$
in the case $\Gamma =\bZ$ \cite{DS02} and in the general case $\Gamma$ is amenable \cite{Yan15}. This brings us the third  motivation to study Question \ref{main interest}.

Motivated from these points of view, we introduce some conditional versions of mean dimension relative to a factor system and study their  properties. When the factor is trivial, these conditional mean dimensions recover as the mean dimension.

In Section 2,  we first define the  conditional mean (topological) dimension. We study a class of extensions, called {\it $G$-extensions},  which generalize (topological) principal group extensions (Definition \ref{G-extension}). A $G$-extension is based on another  dynamical system $\Gamma \curvearrowright G$ (typically $G$ is a compact metrizable group and the action is by automorphisms). It turns out the notion of conditional mean dimension brings us a proper notion to strengthen some results on embedding problems of dynamical systems in terms of Rokhlin dimension of the factor systems.  We present such  applications in Theorem \ref{conditional embedding} and Corollary \ref{G-extension embedding}.  Under certain conditions, Question \ref{main interest} is confirmed.

In Section 3, in terms of notion of conditional mean dimension, we prove an inequality to estimate the mean dimension of an extension system. 

\begin{theorem} \label{subadditivity}
For any factor map $\pi\colon X \to Y$, we have
$$ \mdim(X)\leq \mdim(Y)+\mdim(X|Y).$$
\end{theorem}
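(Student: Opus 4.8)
The plan is to reduce the inequality to a \emph{static} estimate over a single F{\o}lner set, divide by the cardinality of that set, pass to the limit along a F{\o}lner sequence $\{F_n\}$, and finally take the supremum over covers of $X$. I will use the order-invariant description of mean dimension: for a finite open cover $\cU$ of a compact space $Z$, write $\cD(\cU)$ for the minimum of $\ord(\cU')$ over all finite open covers $\cU'$ refining $\cU$; recall that $\cD$ is monotone under refinement and subadditive under join, so that by the Ornstein--Weiss lemma $\lim_n \cD(\cU^{F_n})/|F_n|$ exists (with $\cU^{F_n}=\bigvee_{g\in F_n}g^{-1}\cU$) and $\mdim(Z)=\sup_\cU \lim_n \cD(\cU^{F_n})/|F_n|$. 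By the construction of Section~2, $\mdim(X|Y)$ follows the same scheme with $\cD(\cU^{F_n})$ replaced by a conditional order invariant $\cD\big(\cU^{F_n}\,\big|\,(\pi^{-1}\cV)^{F_n}\big)$ relative to a finite open cover $\cV$ of $Y$, together with an infimum over $\cV$; in particular $\cD(\cA\vee\cB)\le\cD(\cB)+\cD(\cA\,|\,\cB)$ and the relevant limits exist again by Ornstein--Weiss.

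The key static inequality is: for any finite open cover $\cU$ of $X$, any finite open cover $\cV$ of $Y$, and any finite $F\subseteq\Gamma$,
\[
\cD(\cU^{F})\ \le\ \cD(\cV^{F})\ +\ \cD\big(\cU^{F}\,\big|\,(\pi^{-1}\cV)^{F}\big).
\]
To prove it, first note that $\cU^{F}\vee(\pi^{-1}\cV)^{F}$ refines $\cU^{F}$, so monotonicity of $\cD$ gives $\cD(\cU^{F})\le\cD\big(\cU^{F}\vee(\pi^{-1}\cV)^{F}\big)$, and subadditivity of the conditional invariant bounds the latter by $\cD\big((\pi^{-1}\cV)^{F}\big)+\cD\big(\cU^{F}\,\big|\,(\pi^{-1}\cV)^{F}\big)$. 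It remains to show $\cD\big((\pi^{-1}\cV)^{F}\big)\le\cD(\cV^{F})$: by $\Gamma$-equivariance and the fact that $\pi^{-1}$ commutes with finite joins of covers, $(\pi^{-1}\cV)^{F}=\pi^{-1}(\cV^{F})$; and if $\cV'$ is any finite open cover of $Y$ refining $\cV^{F}$, then $\pi^{-1}\cV'$ refines $\pi^{-1}(\cV^{F})$ with $\ord(\pi^{-1}\cV')\le\ord(\cV')$, since the members of $\pi^{-1}\cV'$ containing a point $x$ correspond bijectively to the members of $\cV'$ containing $\pi(x)$. Minimizing over $\cV'$ gives the bound.

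Now divide the static inequality by $|F_n|$ and let $n\to\infty$; all three limits exist, so for every $\cU$ and every $\cV$,
\[
\lim_n \frac{\cD(\cU^{F_n})}{|F_n|}\ \le\ \mdim(Y)\ +\ \lim_n \frac{\cD\big(\cU^{F_n}\,\big|\,(\pi^{-1}\cV)^{F_n}\big)}{|F_n|}.
\]
Fix $\cU$ and $\eta>0$, and choose $\cV$ realizing, up to $\eta$, the infimum over covers of $Y$ that defines $\mdim(X|Y)$ for this $\cU$; then the last term is at most $\mdim(X|Y)+\eta$. Letting $\eta\to0$ and then taking the supremum over $\cU$ yields $\mdim(X)\le\mdim(Y)+\mdim(X|Y)$.

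The monotonicity and equivariance manipulations in the static inequality are routine, and the existence of the limits is just Ornstein--Weiss applied to the (conditional) order invariant, whose subadditivity is established in Section~2. The point requiring genuine care is the last step: one must be free to pick the conditioning cover $\cV$ of $Y$ \emph{after} $\cU$, so that $(\pi^{-1}\cV)^{F_n}$ is fine enough to absorb the gap between $\cD(\cU^{F_n})$ and $\cD(\cV^{F_n})$ without the required fineness deteriorating as $n\to\infty$; expressing $\mdim(X|Y)$ as a supremum over $\cU$ of an infimum over $\cV$ is precisely what legitimizes this. (If one prefers the metric formulation via $\epsilon$-injective maps to polyhedra and the dynamical metrics $d_{F_n}$, the same scheme works, with the analogous subtlety that the ``conditioning scale'' needed on $Y$ depends on the map realizing the conditional $\epsilon$-width, hence a priori on $n$; this is handled by choosing the metric on $Y$ compatible with the fibrewise structure so that uniform continuity supplies that scale uniformly.)
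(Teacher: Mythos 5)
Your static inequality $\cD(\cU^{F})\le\cD(\cV^{F})+\cD(\cU^{F}\mid(\pi^{-1}\cV)^{F})$ and the limit manipulations around it are fine, but the argument has a genuine gap at exactly the step you flag as "requiring genuine care." The paper's $\cD(\cU^F|Y)$ is defined by conditioning on the exact fiber partition $\{\pi^{-1}(y)\}_{y\in Y}$; there is no infimum over finite open covers $\cV$ of $Y$ in the definition of $\mdim(X|Y)$. Your proof needs the identity (or at least the inequality "$\ge$")
$\mdim(X|Y)\;\ge\;\sup_\cU\inf_\cV\lim_n \cD\bigl(\cU^{F_n}\mid\pi^{-1}(\cV^{F_n})\bigr)/|F_n|$,
and this is asserted ("by the construction of Section~2 \dots together with an infimum over $\cV$"), not proved. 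The cheap comparison goes the wrong way: since each fiber is contained in some member of $\pi^{-1}(\cV^{F})$, one only gets $\cD(\cU^{F}|Y)\le\cD(\cU^{F}\mid\pi^{-1}(\cV^{F}))$ for every fixed $\cV$, i.e.\ your sup-inf quantity dominates $\mdim(X|Y)$ a priori. For a single fixed $F$ a compactness argument does produce a finite cover $\cV_F$ of $Y$ with $\cD(\cU^{F}\mid\pi^{-1}(\cV_F))=\cD(\cU^{F}|Y)$, but $\cV_F$ depends on $F$, and nothing in your argument shows that one $\cV$, chosen after $\cU$, works uniformly along the F{\o}lner sequence up to $\eta$. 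The parenthetical metric remark has the same problem: uniform continuity does not supply a conditioning scale on $Y$ that is independent of $n$, because the map realizing $\Wdim_\varepsilon(X|Y,\rho_{F_n})$ varies with $n$.

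Closing this gap is precisely the content of the paper's proof: one fixes finitely many sufficiently invariant quasitiles $F_1,\dots,F_m$ (Lemma \ref{quasi-tiling}), chooses covers $\cW_j$ realizing $\cD(\cU^{F_j}|Y)$, passes to closures and uses compactness of $Y$ to extract a \emph{single} finite cover $\cV$ of $Y$ with $\cW_j\vee\pi^{-1}(\cV)$ refining $\cU^{F_j}$ for all $j$, and then quasi-tiles an arbitrary very invariant set $A$ to compare $\cD(\cU^{A})$ with $\cD(\cV^{A})+\sum_j|D_j|\,\cD(\cU^{F_j}|Y)$ plus an $\varepsilon|A|\cD(\cU)$ error. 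If you tried to justify your sup-inf reformulation honestly, you would be forced into essentially this tiling-plus-compactness argument, so as written your proposal assumes the crux rather than proving it.
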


The key technique of the proof is to approximate sufficiently large F{\o}lners set by smaller F{\o}lner sets as in the proof of \cite[Theorem 4.6]{Tsukamoto08}. 
In fact, we can slightly adjust the proof to obtain a more general inequality for a composition of two factor maps, or even the setting of the inverse limit of factor maps.

We remind the reader that $\mdim(X|Y)$ is an upper bound of $\sup_{y \in Y}\mdim( \pi^{-1}(y), \Gamma)$ (see Proposition \ref{fiber as lower bound}) and in some case they coincide with each other. Thus the above estimation can be thought as a weak version of the inequality in Question \ref{main interest}(see Proposition \ref{fiber inequality}). 

Note that as $X$ is a product system $Y\times Z$ for some dynamical system $\Gamma \curvearrowright Z$ associated with the diagonal action, $\pi$ is the projection map, then we have $\mdim(X|Y)=\mdim(Z)$ (see Proposition \ref{generalization}). This recovers as the subadditivity formula for Cartesian products of dynamical systems \cite[Proposition 2.8]{LW00}.

As a cousin of mean dimension, Lindenstrauss and Weiss introduced the metric mean dimension as an upper bound of mean dimension \cite{LW00}. This notion is a dynamical analogue of lower box dimension. In Section 4, we define the conditional version of the metric mean dimension. It is natural to ask whether the conditional metric mean dimension is an upper bound of conditional mean dimension (Question \ref{comparison}).

Downarowicz and Serafin introduced the topological fiber entropy given a measure on the factor system \cite[Definition 8]{DS02}. Motivated by this approach, we introduce an analogue for mean dimension. It turns that this  mean dimension given a measure serves as a lower bound for the conditional mean dimension (See Propositions \ref{inequalities} and \ref{equality}).

\noindent{\it Acknowledgements.}
I am grateful to the inspiring discussion with Hanfeng Li and the referee's valuable comment. The author is supported by the Research Support Funding GJ10700120 at Soochow University and the Institute of Mathematics of the Polish Academy of Sciences.

\section{Conditional mean topological dimension}

In this section, we define the notion of conditional mean topological dimension, discuss its properties, and compute some examples.

Let us first recall some machinery of amenable groups in the preparation of defining dynamical invariants.
For a  countable group $\Gamma$ denote by $\cF(\Gamma)$ the set of all nonempty finite subsets of  $\Gamma$.
\subsection{Amenable groups}
For each $K \in \cF(\Gamma)$ and $\delta > 0$, denote by $\cB(K, \delta)$ the set of all $F \in \cF(\Gamma)$ satisfying $|\{t \in F: Kt \subseteq F\}| \geq (1-\delta)|F|$. $\Gamma$ is called {\it amenable} if $\cB(K, \delta)$ is not empty for each pair  $(K, \delta)$.

The collection of pairs $(K, \delta)$ forms a net $\Lambda$ in the sense that $(K', \delta')\succeq (K, \delta)$ if $K'\supseteq K$ and $\delta'\leq \delta$. For a real-valued function $\varphi$ defined on $\cF(\Gamma) \cup \{\emptyset \}$, we say that {\it $\varphi(F)$ converges to $c\in \bR$ when $F\in \cF(\Gamma)$ becomes more and more
invariant}, denoted by $\lim_F\varphi(F)=c$, if for any $\varepsilon>0$ there is some $(K, \delta)\in \Lambda$ such that  $|\varphi(F)-c|<\varepsilon$ for all $F\in \cB(K, \delta)$.
In general, $\varlimsup_F \varphi(F)$ is defined as
$$\varlimsup_F \varphi(F): =\lim_{(K, \delta) \in \Lambda} \sup_{F \in \cB(K, \delta)} \varphi(F).$$

In the rest of this paper, $\Gamma$ will always denote a countable amenable group.
The following fundamental lemma, due to Ornstein and Weiss,  is crucial to define dynamical invariants for amenable group actions \cite[Theorem 6.1]{LW00}.
\begin{lemma} \label{OW}
Let  $\varphi \colon \cF(\Gamma)  \to [0, +\infty)$ be a map satisfying\\
\begin{enumerate}
    \item $\varphi(Fs)=\varphi(F)$ for all $F \in \cF(\Gamma)$ and $s \in \Gamma$; \\
    \item $\varphi(F_1\cup F_2) \leq \varphi(F_1) + \varphi(F_2)$ for all $F_1, F_2 \in \cF(\Gamma)$.\\
\end{enumerate}
Then the limit $\lim_F\varphi(F)/|F|$ exists.
\end{lemma}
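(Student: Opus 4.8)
The plan is to prove existence by showing
$$\varlimsup_F \frac{\varphi(F)}{|F|}\ \le\ \varliminf_F \frac{\varphi(F)}{|F|},$$
where $\varliminf_F$ is defined analogously to $\varlimsup_F$; the two sides then coincide and the common value is the desired limit. Note first that the quantity is automatically bounded: by (1) we have $\varphi(\{s\})=\varphi(\{e\})=:M$ for all $s\in\Gamma$, and then (3) (iterated over singletons) gives $\varphi(E)\le M|E|$ for every $E\in\cF(\Gamma)$, so $0\le \varphi(F)/|F|\le M$. If $M=0$ there is nothing to prove, so assume $M>0$, and set $c:=\varliminf_F\varphi(F)/|F|\in[0,M]$. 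The only structural fact about $c$ that I will use is that, by its very definition, for every $(K,\delta)\in\Lambda$ and every $\eta>0$ there exists $F\in\cB(K,\delta)$ with $\varphi(F)/|F|<c+\eta$; this "arbitrarily invariant near-minimizers" property is what makes the argument go, and it is the reason for taking $c$ to be the $\varliminf$ along the net rather than the infimum over all finite sets.

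The heart of the matter is the Ornstein--Weiss quasi-tiling theorem. Fix $\varepsilon\in(0,1/2)$ and an integer $n$ with $(1-\varepsilon)^n<\varepsilon$. I would first build finite sets $T_1,\dots,T_n\in\cF(\Gamma)$ such that: (i) each $T_k$ is invariant enough — relative to $\varepsilon$ and to $T_1,\dots,T_{k-1}$ — for the quasi-tiling theorem to apply; and (ii) $\varphi(T_k)/|T_k|<c+\varepsilon$ for every $k$. These can be met simultaneously: the invariance demanded of $T_k$ is exactly membership in some $\cB(K_k,\delta_k)$ with $(K_k,\delta_k)$ depending only on $\varepsilon$ and $T_1,\dots,T_{k-1}$, and within any such $\cB(K_k,\delta_k)$ one may choose $T_k$ with $\varphi(T_k)/|T_k|<c+\varepsilon$ by the property of $c$ recorded above; so $T_1,\dots,T_n$ are produced recursively. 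The quasi-tiling theorem then yields $(K,\delta)\in\Lambda$ such that every $F\in\cB(K,\delta)$ contains a finite family of pairwise disjoint right-translates $T_{k_j}c_j\subseteq F$ whose union $U$ satisfies $|U|\ge(1-\varepsilon)|F|$.

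The estimate is then routine. For $F\in\cB(K,\delta)$, applying (3) across the disjoint tiles and then bounding the remainder via $\varphi(F\setminus U)\le M|F\setminus U|\le M\varepsilon|F|$ (and using (1)) gives
$$\varphi(F)\ \le\ \sum_j\varphi(T_{k_j}c_j)+\varphi(F\setminus U)\ =\ \sum_j\varphi(T_{k_j})+\varphi(F\setminus U).$$
Since $\varphi(T_{k_j})<(c+\varepsilon)|T_{k_j}|$ and the $T_{k_j}c_j$ are disjoint subsets of $F$, we get $\sum_j\varphi(T_{k_j})<(c+\varepsilon)\sum_j|T_{k_j}c_j|=(c+\varepsilon)|U|\le(c+\varepsilon)|F|$ (using $c+\varepsilon\ge0$). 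Hence $\varphi(F)/|F|<c+(1+M)\varepsilon$ for all $F\in\cB(K,\delta)$, so $\varlimsup_F\varphi(F)/|F|\le c+(1+M)\varepsilon$. Letting $\varepsilon\to0$ gives $\varlimsup_F\varphi(F)/|F|\le c=\varliminf_F\varphi(F)/|F|$, and the limit exists (and lies in $[0,M]$).

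The only genuine obstacle is the quasi-tiling theorem used above: for an arbitrary amenable $\Gamma$ one must produce, with a number of "tile shapes" bounded in terms of $\varepsilon$ alone and each required only to be sufficiently invariant relative to its predecessors, a covering of any sufficiently invariant $F$ by pairwise disjoint translates of these shapes up to an $\varepsilon$-fraction. Its proof is the standard greedy packing argument of Ornstein and Weiss — extract an $\varepsilon$-disjoint subfamily, disjointify it, and iterate, with the condition $(1-\varepsilon)^n<\varepsilon$ capping the number of rounds — and everything else above is bookkeeping with (1)--(3). For the purposes of this paper one may instead simply invoke \cite[Theorem 6.1]{LW00} and treat the quasi-tiling machinery as a black box.
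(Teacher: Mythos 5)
Your proposal is correct in substance, but note that the paper itself does not prove this lemma at all: it simply cites \cite[Theorem 6.1]{LW00}, and what you have written is essentially a reconstruction of that standard Ornstein--Weiss argument (near-minimizing, sufficiently invariant ``tile shapes'' chosen recursively, quasi-tiling of any sufficiently invariant $F$, then subadditivity plus the trivial bound $\varphi(E)\leq \varphi(\{e_\Gamma\})|E|$ on the remainder). Your reduction of the problem to $\varlimsup_F\leq\varliminf_F$ is sound, since $\cB(K',\delta')\subseteq\cB(K,\delta)$ whenever $(K',\delta')\succeq(K,\delta)$, so the net of infima is non-decreasing and every $\cB(K,\delta)$ indeed contains near-minimizers for $c=\varliminf_F\varphi(F)/|F|$; and the final bookkeeping with properties (1)--(3) is fine. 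The one inaccuracy is your statement of the quasi-tiling theorem: Ornstein--Weiss (see Lemma \ref{quasi-tiling} in this paper, or \cite[Theorem 8.3]{Li12}) produces translates $T_{k_j}c_j\subseteq F$ that are only \emph{$\varepsilon$-disjoint}, not pairwise disjoint; pairwise disjoint sets can be extracted, but they are then subsets of translates rather than translates, which is useless here since you need $\varphi$ evaluated on the full tiles $T_{k_j}$. This is harmless: keeping the full, $\varepsilon$-disjoint translates, the subadditivity step $\varphi(F)\leq\sum_j\varphi(T_{k_j})+\varphi(F\setminus U)$ is unchanged, and the counting becomes $\sum_j|T_{k_j}|\leq\frac{1}{1-\varepsilon}|F|$, so the final bound is $\varphi(F)/|F|<\frac{c+\varepsilon}{1-\varepsilon}+M\varepsilon$, which still tends to $c$ as $\varepsilon\to 0$ --- this is exactly the same $\frac{1}{1-\varepsilon}$ correction the paper itself uses when applying Lemma \ref{quasi-tiling} in the proof of Theorem \ref{subadditivity}. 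With that adjustment your argument is a complete and correct proof, whereas the paper treats the lemma as a black box.
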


\subsection{Conditional mean topological dimension}

Let $X$ be a compact metrizable space. For two finite open covers $\cU$ and $\cV$ of $X$, the joining $\cU \vee \cV $ is defined as
$\cU\vee \cV=\{U \cap V: U \in \cU, V \in V\}$. We say $\cU$ {\it refines} $\cV$, denoted by $\cU \succeq \cV$, if every element of $\cU$ is contained in some element of $\cV$. Denote by $\ord \ (\cU)$ the overlapping number of $\cU$, i.e.
$$\ord \ (\cU)=\max_{x\in X} \sum_{U \in \cU} 1_U(x)-1.$$

Now fix a factor map $\pi\colon X \to Y$ and a finite open cover $\cU$ of $X$.
Consider the number $\cD(\cU|Y):=\min (\ord \ (\cW))$ for $\cW$ ranging over all finite open covers  of $X$ such that $\{\pi^{-1}(y)\}_{y \in Y} \vee \cW$ {\it refines} $\cU$. We put $\cD(\cU):=\cD(\cU|Y)$ for $Y$ being a singleton.
For any $F \in \cF(\Gamma)$, denote by $\cU^F$ the finite open cover $\vee_{s \in F}s^{-1}\cU$.
To see the function  $\varphi\colon \cF(\Gamma)\cup \{\emptyset\} \to \bR$ sending $F$ to  $\cD(\cU^F|Y)$  satisfies the conditions of Lemma \ref{OW}, we have a conditional version of  \cite[Proposition 2.4]{LW00} to assist us.

\begin{lemma} \label{conditional bridge}
Suppose that $\pi: X \to Y$ is a continuous map and $\cU$ is a finite open cover of $X$. Then
$\cD(\cU|Y) \leq k$ if and only if there exists a continuous  map $f: X \to P$ for some polyhedron $P$ with $\dim (P) = k$ such that $\{f^{-1}(p)\cap \pi^{-1}(y)\}_{(p, y) \in P\times Y}$ refines $\cU$.
\end{lemma}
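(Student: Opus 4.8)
The plan is to prove both implications with the nerve / partition‑of‑unity machinery from the proof of \cite[Proposition 2.4]{LW00}, carrying the factor $\pi$ along: wherever that argument uses ``$\{f^{-1}(p)\}$ refines $\cU$'' I will use ``$\{f^{-1}(p)\cap\pi^{-1}(y)\}$ refines $\cU$''. The one new ingredient is a compactness argument, uniform over $Y$, that converts a fibrewise covering condition into an honest open covering condition of $X$. Throughout I fix compatible metrics on $P$ and on $Y$.

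For sufficiency, I would start from $\cD(\cU|Y)\le k$, so there is a finite open cover $\cW=\{W_1,\dots,W_n\}$ of $X$ with $\ord(\cW)\le k$ such that each $W_i\cap\pi^{-1}(y)$ is contained in a member of $\cU$. I would take $N$ to be the nerve of $\cW$ (a finite complex with $\dim N=\ord(\cW)\le k$, realized inside the standard $(n-1)$‑simplex on the vertices $1,\dots,n$), choose a partition of unity $\{\phi_i\}$ subordinate to $\cW$, and set $f=(\phi_1,\dots,\phi_n)\colon X\to N$. For $p\in N$ the carrier $\sigma(p)=\{i:p_i>0\}$ is a nonempty simplex of $N$ and $f^{-1}(p)\subseteq\bigcap_{i\in\sigma(p)}W_i$, so for any $i_0\in\sigma(p)$ and any $y\in Y$ one gets $f^{-1}(p)\cap\pi^{-1}(y)\subseteq W_{i_0}\cap\pi^{-1}(y)\subseteq U$ for some $U\in\cU$; hence $\{f^{-1}(p)\cap\pi^{-1}(y)\}_{(p,y)}$ refines $\cU$. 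If $\dim N<k$ I would replace $N$ by $N\sqcup\Delta^{k}$ and post‑compose $f$ with the inclusion; this only adds empty fibres, so the refinement survives and the target now has dimension exactly $k$.

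For necessity, suppose $f\colon X\to P$ is continuous, $\dim P=k$, and $\{f^{-1}(p)\cap\pi^{-1}(y)\}_{(p,y)}$ refines $\cU$, say $f^{-1}(p)\cap\pi^{-1}(y)\subseteq U_{p,y}\in\cU$. The key step is to show that each $p\in P$ has an open neighbourhood $Q_p$ with $f^{-1}(Q_p)\cap\pi^{-1}(y)$ lying in a member of $\cU$ for \emph{every} $y\in Y$. I would do this in two stages: first, for fixed $p,y$, note that the compact sets $f^{-1}(\overline{B(p,1/\ell)})\cap\pi^{-1}(\overline{B(y,1/m)})\cap(X\setminus U_{p,y})$ decrease in $\ell,m$ and have empty intersection (namely $f^{-1}(p)\cap\pi^{-1}(y)\cap(X\setminus U_{p,y})$), so one of them is empty, giving neighbourhoods $Q_{p,y}\ni p$, $V_{p,y}\ni y$ with $f^{-1}(Q_{p,y})\cap\pi^{-1}(V_{p,y})\subseteq U_{p,y}$; second, cover the compact space $Y$ by finitely many $V_{p,y_1},\dots,V_{p,y_r}$ and put $Q_p=\bigcap_j Q_{p,y_j}$. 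Then $\{Q_p\}_{p\in P}$ is an open cover of the compact polyhedron $P$; I would pick a Lebesgue number $\lambda$, pass to a barycentric subdivision of a triangulation of $P$ whose vertex open stars all have diameter $<\lambda$, and let $\cQ$ be the cover by those stars, so $\ord(\cQ)=\dim P=k$ and each member of $\cQ$ sits inside some $Q_p$. Finally $\cW:=\{f^{-1}(Q):Q\in\cQ\}$ is a finite open cover of $X$ with $\ord(\cW)\le\ord(\cQ)\le k$ and, for each $Q\in\cQ$ and $y\in Y$, the set $f^{-1}(Q)\cap\pi^{-1}(y)$ is contained in a member of $\cU$; hence $\{\pi^{-1}(y)\}_{y\in Y}\vee\cW$ refines $\cU$ and $\cD(\cU|Y)\le\ord(\cW)\le k$.

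I expect the main obstacle to be precisely the uniform compactness step in the necessity direction: promoting the pointwise inclusions $f^{-1}(p)\cap\pi^{-1}(y)\subseteq U_{p,y}$ to an inclusion valid on a fixed neighbourhood $Q_p$ of $p$ for all $y\in Y$ at once (this is where compactness of both $X$ and $Y$ enters). Everything else — the nerve, the subordinate partition of unity, the open‑star cover and the bound $\ord(\{f^{-1}(Q):Q\in\cQ\})\le\ord(\cQ)$ — is the standard polyhedral bookkeeping of \cite[Proposition 2.4]{LW00}, now merely intersected with the fibres of $\pi$.
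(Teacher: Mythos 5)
Your proposal is correct and follows essentially the paper's argument: the direction from $\cD(\cU|Y)\le k$ to the map $f$ is the same nerve/partition-of-unity construction (the paper fixes the dimension to exactly $k$ by embedding the nerve in a $k$-dimensional polyhedron rather than adjoining a disjoint $\Delta^k$, which is immaterial). For the converse, your hands-on compactness step producing the neighbourhoods $Q_p$ and the open-star cover of order $\dim P$ is just a self-contained version of what the paper gets by applying \cite[Proposition 2.4]{LW00} to the map $x\mapsto(f(x),\pi(x))\in P\times Y$ and passing to a product-form cover, so the two proofs coincide in substance.
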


\begin{proof}
Firstly suppose that we have such a continuous map $f: X \to P$. Let $\varphi: X \to P \times Y$ be the map sending $x$ to $(f(x), \pi(x))$. By \cite[Proposition 2.4]{LW00}, there exists a finite open cover $\cV$ of $P \times Y$ such that $\varphi^{-1}(\cV)$ refines $\cU$. Without loss of generality, we may assume that $\cV$ is of a form $\cW \times \cV_1$ for some finite open cover $\cW$ of $P$ and finite open cover $\cV_1$ of $Y$ respectively. Then for every $W \in \cW$ and  $y \in Y$, we have $y \in V$ for some $V \in \cV_1$ and $\varphi^{-1}(W\times V) \subseteq U$ for some $U \in \cU$. Thus
$$f^{-1}(W)\cap \pi^{-1}(y) \subseteq f^{-1}(W)\cap \pi^{-1}(V)=\varphi^{-1}(W\times V) \subseteq U.$$
This concludes that $f^{-1}(\cW) \vee \{\pi^{-1}(y)\}_y$ refines $\cU$.
Choose a finite open cover $\cV_2$ of $P$ refining $\cW$ such that $\ord \ (\cV_2) \leq \dim (P)$. Then $\ord \ (f^{-1}(\cV_2)) \leq \ord \ (\cV_2) \leq \dim P$ and $f^{-1}(\cV_2) \vee \{\pi^{-1}(y)\}_y$ refines $\cU$. It follows that $\cD(\cU|Y) \leq \dim P =k$.

Now suppose that $\cD(\cU|Y) \leq k$.  By definition, there exists a finite open cover $\cW$ of $X$ such that $\{\pi^{-1}(y)\}_y \vee \cW$ refines $\cU$ and $\ord \ (\cW) \leq k$. Let $\{g_W\}_{W \in \cW}$ be a partition of unity subordinate to $\cW$ and $\Delta_\cW$ be the polyhedron induced from the  nerve complex of $\cW$. Define the map $g: X \to \Delta_\cW$ sending $x$ to $\sum_{W \in \cW} g_W(x)e_W$, where $e_W$ stands for the vertex indexed with $W \in \cW$. Then for every $q \in \Delta_\cW$, $g^{-1}(q)$ is contained in an element  $W$ of $\cW$ corresponding to a vertex of least dimensional simplex of $\Delta_\cW$ containing $q$. So for each $y \in Y$, $g^{-1}(q)\cap \pi^{-1}(y) \subseteq W\cap \pi^{-1}(y) \subseteq U $ for some $U$ in $\cU$. Choose a topological embedding $h: \Delta_\cW \to P$ for some polyhedron $P$ with $\dim P=k$. Then the map $f:=h\circ g$ is what we need.
\end{proof}

From Lemma \ref{conditional bridge}, we see that $\varphi$ is sub-additive and hence $\varphi$
  satisfies the conditions of Lemma \ref{OW}. Thus the limit $\lim_F \cD(\cU^F|Y)/|F|$ exists.

\begin{definition}
We define the {\it conditional mean topological dimension of $\Gamma \curvearrowright X$ relative to $\Gamma \curvearrowright Y$}  as
$$\mdim(X|Y):=\sup_\cU \lim_F \frac{\cD(\cU^F|Y)}{|F|}$$
for $\cU$ running over all finite open covers of $X$.
For simplicity, we may also say $\mdim(X|Y)$ is the  conditional mean dimension of $X$ relative to $Y$.
\end{definition}

 When $Y$ is a singleton, $\mdim(X|Y)$ recovers the mean topological dimension of $\Gamma \curvearrowright X$, which we denote by $\mdim(X)$ (see \cite[Definition 2.6]{LW00}). Moreover, as $\Gamma =\{e_\Gamma\}$ is the trivial group, $\mdim(X)$ recovers the {\it (covering) dimension} of $X$, which we denote by $\dim (X)$.

\begin{proposition} \label{generalization}

Let $\Gamma \curvearrowright Y$ and $\Gamma \curvearrowright Z$ be two dynamical systems. Let $\Gamma$ act on $ Y\times Z$ diagonally and $\pi: Y\times Z \to Y$ the projection map. Then
$\mdim(Y\times Z|Y)=\mdim(Z)$.
\end{proposition}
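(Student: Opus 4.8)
The plan is to prove the two inequalities $\mdim(Y\times Z|Y)\le\mdim(Z)$ and $\mdim(Y\times Z|Y)\ge\mdim(Z)$ separately, after recording a few routine facts. Write $\pi_Z\colon Y\times Z\to Z$ for the other projection; since $\Gamma$ acts diagonally, $\pi_Z$ is $\Gamma$-equivariant and, for $s\in\Gamma$ and sets $Q\subseteq Y$, $V\subseteq Z$, one has $s^{-1}(Q\times V)=(s^{-1}Q)\times(s^{-1}V)$. Consequently, for finite open covers $\cQ$ of $Y$ and $\cV$ of $Z$, putting $\cQ\times\cV=\{Q\times V: Q\in\cQ,\,V\in\cV\}$, we get $(\cQ\times\cV)^F=\cQ^F\times\cV^F$ and $(\pi_Z^{-1}\cV)^F=\pi_Z^{-1}(\cV^F)$ for every $F\in\cF(\Gamma)$, and also $\ord(\pi_Z^{-1}\cV)\le\ord(\cV)$ since $(y,z)\in Y\times V\iff z\in V$. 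I will also use the evident monotonicity: if a finite open cover $\cA$ refines $\cB$ then $\cD(\cB|Y)\le\cD(\cA|Y)$, because any $\cW$ with $\{\pi^{-1}(y)\}_y\vee\cW$ refining $\cA$ also has $\{\pi^{-1}(y)\}_y\vee\cW$ refining $\cB$ by transitivity. Finally, by a Lebesgue-number argument on a compatible product metric, every finite open cover $\cU$ of $Y\times Z$ is refined by some product cover $\cQ\times\cV$.

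For the upper bound, fix a finite open cover $\cU$ of $Y\times Z$ and choose $\cQ,\cV$ with $\cQ\times\cV$ refining $\cU$. Then $(\cQ\times\cV)^F=\cQ^F\times\cV^F$ refines $\cU^F$, so $\cD(\cU^F|Y)\le\cD(\cQ^F\times\cV^F|Y)$. It therefore suffices to prove $\cD(\cA\times\cB|Y)\le\cD(\cB)$ for any finite open covers $\cA$ of $Y$ and $\cB$ of $Z$. For this, pick a finite open cover $\cW'$ of $Z$ that refines $\cB$ with $\ord(\cW')=\cD(\cB)$ and set $\cW=\pi_Z^{-1}\cW'$. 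Then $\ord(\cW)\le\cD(\cB)$, and for each $y\in Y$ and $W'\in\cW'$, choosing $Q\in\cA$ with $y\in Q$ and $V\in\cB$ with $W'\subseteq V$ gives $\pi^{-1}(y)\cap(Y\times W')=\{y\}\times W'\subseteq Q\times V\in\cA\times\cB$; hence $\{\pi^{-1}(y)\}_y\vee\cW$ refines $\cA\times\cB$, proving the claim. Thus $\cD(\cU^F|Y)\le\cD(\cV^F)$ for all $F$, so $\lim_F\cD(\cU^F|Y)/|F|\le\lim_F\cD(\cV^F)/|F|\le\mdim(Z)$; taking the supremum over $\cU$ yields $\mdim(Y\times Z|Y)\le\mdim(Z)$.

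For the lower bound, fix a finite open cover $\cV$ of $Z$ and set $\cU=\pi_Z^{-1}\cV$, so that $\cU^F=\pi_Z^{-1}(\cV^F)$. I claim $\cD(\cU^F|Y)\ge\cD(\cV^F)$. Fix any point $y_0\in Y$ (which is nonempty, being a factor), and let $\cW$ be any finite open cover of $Y\times Z$ with $\{\pi^{-1}(y)\}_y\vee\cW$ refining $\cU^F$. Then the slices $\cW|_{y_0}:=\{\,\{z\in Z:(y_0,z)\in W\}: W\in\cW\,\}$ form a finite open cover of $Z$ (open in $Z$ and covering $Z$ because $\cW$ covers $\{y_0\}\times Z$), it refines $\cV^F$ (since $\pi^{-1}(y_0)\cap W\subseteq Y\times V'$ for some $V'\in\cV^F$ forces $\{z:(y_0,z)\in W\}\subseteq V'$), and $\ord(\cW|_{y_0})\le\ord(\cW)$ (a point $z\in Z$ belonging to several slices corresponds to the point $(y_0,z)$ belonging to the same sets $W$). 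Hence $\cD(\cV^F)\le\ord(\cW)$, and taking the infimum over $\cW$ gives the claim. Therefore $\lim_F\cD(\cU^F|Y)/|F|\ge\lim_F\cD(\cV^F)/|F|$, and the supremum over $\cV$ gives $\mdim(Y\times Z|Y)\ge\mdim(Z)$, completing the proof.

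I do not expect a serious obstacle: each step is elementary cover combinatorics. The two substantive points are the product-refinement fact, where compactness and metrizability (the Lebesgue number) enter, and the identity $(\cQ\times\cV)^F=\cQ^F\times\cV^F$, which rests precisely on the action on $Y\times Z$ being diagonal; this is what decouples the base and fibre directions and makes both inequalities go through.
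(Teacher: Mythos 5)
Your proof is correct and follows essentially the same route as the paper: the upper bound by reducing to product covers and pulling back a minimal-order cover of $Z$ through the projection onto $Z$, and the lower bound by slicing an admissible cover $\cW$ at a fixed point $y_0$ (the paper phrases this as pulling back along the embedding $z\mapsto(y_0,z)$). The only difference is that you spell out the Lebesgue-number reduction to product covers and the monotonicity of $\cD(\cdot|Y)$, which the paper leaves implicit.
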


\begin{proof}
Fix a finite open cover $\cU$ of $Z$. To show $\mdim(Z) \leq \mdim(Y\times Z|Y)$, for any $F \in \cF(\Gamma)$, it suffices to show $\cD(\cU^F) \leq \cD(\cV^F|Y)$ for $\cV:=\{Y\times U: U \in \cU\}$.

Suppose that $\cD(\cV^F|Y)=\ord \ (\cW)$ for some finite open cover $\cW$ of $Y\times Z$ such that $\cW \vee \{\pi^{-1}(y)\}_{y \in Y}$ refines $\cV^F$. Consider the topological embedding $\varphi: Z \to Y\times Z$ sending $z$ to $(y_0, z)$ for some fixed $y_0$ in $Y$. Then for every $W \in \cW$, there exists  $U \in \cU^F$ such that $W\cap \pi^{-1}(y_0) \subseteq Y \times U$. It follows that $\varphi^{-1}(W) \subseteq U$ and hence $\varphi^{-1}(\cW)$ refines $\cU^F$. Thus
$$\cD(\cU^F) \leq \ord \ (\varphi^{-1}(\cW)) \leq \ord \ (\cW) =\cD(\cV^F|Y).$$

To show the other direction, for any finite open covers $\cU_0$ and $\cV_0$ of $Y$ and $Z$ respectively, we need only to show $\cD((\cU_0\times \cV_0)^F|Y) \leq \cD(\cV_0^F)$ for all $F \in \cF(\Gamma)$.

Let $\cD(\cV_0^F)=\ord \ (\cV)$ for some finite open cover $\cV$ of $Z$ refining $\cV_0^F$. Denote by $p_Z$ the projection map from $Y\times Z$ onto $Z$. Then for any $y \in Y$ and $V \in \cV$, one has $p_Z^{-1}(V)\cap \pi^{-1}(y)=\{y\}\times V \subseteq U\times V$ for any $U \in \cU_0^F$ containing $y$.  That means $p_Z^{-1}(\cV) \cap \{\pi^{-1}(y)\}_{y \in Y}$ refines $(\cU_0\times \cV_0)^F$. Thus
$$\cD((\cU_0\times \cV_0)^F|Y)  \leq \ord \ (p_Z^{-1}(\cV))=\ord \ (\cV) = \cD(\cV_0^F).$$
\end{proof}

Now we introduce a metric approach to the conditional mean dimension in line with \cite[Theorem 6.5.4]{Coornaert15B}. Let $\pi\colon X \to Y$ be a factor map and $\rho$ a compatible metric on $X$.  For any $\varepsilon >0$, denote by $\Wdim_\varepsilon(X|Y, \rho)$ the minimal dimension of a polyhedron $P$ which admits a continuous map $f\colon X \to P$ such that ${\rm diam} (f^{-1}(p)\cap \pi^{-1}(y), \rho) < \varepsilon $ for every $(p, y) \in P \times Y$. We call such a map  a {\it $(\rho, \varepsilon)$-embedding relative to Y}. For every $F \in \cF(\Gamma)$, denote by $\rho_F$ the metric  on $X$ defined as
$$\rho_F(x, y):=\max_{s \in F} \rho(sx, sy).$$
Then it is easy to check that the function  $ \cF(\Gamma)\cup \{\emptyset\} \to \bR$ sending $F$ to $\Wdim_\varepsilon(X|Y, \rho_F)$ satisfies the conditions of Lemma \ref{OW}. Thus the limit $\lim_F \frac{\Wdim_\varepsilon(X|Y, \rho_F)}{|F|}$ exists.

\begin{proposition} \label{metric approach}
For a compatible metric $\rho$ on $X$, we have
$$\mdim(X|Y)=\sup_{\varepsilon >0} \lim_F \frac{\Wdim_\varepsilon(X|Y, \rho_F)}{|F|}.$$
\end{proposition}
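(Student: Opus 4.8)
The proof proceeds by establishing the two inequalities $\mdim(X|Y) \le \sup_{\varepsilon>0} \lim_F \Wdim_\varepsilon(X|Y,\rho_F)/|F|$ and the reverse, both by comparing $\cD(\cU^F|Y)$ with $\Wdim_\varepsilon(X|Y,\rho_F)$ for suitable $\cU$ and $\varepsilon$, using Lemma \ref{conditional bridge} as the translation device between combinatorial data (covers and $\cD$) and geometric data (maps to polyhedra). The whole argument is a relative, $F$-indexed version of the classical comparison between $\mdim$ and the Wdim-based formula (as in \cite[Theorem 6.5.4]{Coornaert15B}), and the amenable-group averaging is harmless since both sides are already expressed as existing limits over $F$; the substance is entirely at the level of a single open cover $\cU$ versus a single scale $\varepsilon$.

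\textbf{First inequality.} Fix a finite open cover $\cU$ of $X$ and let $\varepsilon>0$ be a Lebesgue number of $\cU$ with respect to $\rho$. For each $F\in\cF(\Gamma)$, $\varepsilon$ is a Lebesgue number of $\cU^F$ with respect to $\rho_F$ (since every element of $\cU^F$ is an intersection $\bigcap_{s\in F} s^{-1}U_s$, and a $\rho_F$-ball of radius $\varepsilon$ has $\rho$-diameter $<\varepsilon$ after applying each $s$). Given a $(\rho_F,\varepsilon)$-embedding relative to $Y$, say $f\colon X\to P$ with $\dim P = \Wdim_\varepsilon(X|Y,\rho_F)$, each set $f^{-1}(p)\cap\pi^{-1}(y)$ has $\rho_F$-diameter $<\varepsilon$, hence lies in some element of $\cU^F$; so the family $\{f^{-1}(p)\cap\pi^{-1}(y)\}_{(p,y)}$ refines $\cU^F$, and Lemma \ref{conditional bridge} gives $\cD(\cU^F|Y)\le\dim P=\Wdim_\varepsilon(X|Y,\rho_F)$. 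Dividing by $|F|$, passing to the limit over $F$, and then taking the supremum over $\cU$ yields $\mdim(X|Y)\le\sup_{\varepsilon>0}\lim_F\Wdim_\varepsilon(X|Y,\rho_F)/|F|$.

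\textbf{Second inequality.} Fix $\varepsilon>0$. By compactness choose a finite open cover $\cU$ of $X$ with $\mathrm{mesh}(\cU,\rho)<\varepsilon$, i.e. every element of $\cU$ has $\rho$-diameter $<\varepsilon$. For $F\in\cF(\Gamma)$, every element of $\cU^F$ then has $\rho_F$-diameter $<\varepsilon$. By Lemma \ref{conditional bridge} there is a continuous $f\colon X\to P$ with $\dim P=\cD(\cU^F|Y)$ such that $\{f^{-1}(p)\cap\pi^{-1}(y)\}_{(p,y)}$ refines $\cU^F$; hence each $f^{-1}(p)\cap\pi^{-1}(y)$ has $\rho_F$-diameter $<\varepsilon$, so $f$ is a $(\rho_F,\varepsilon)$-embedding relative to $Y$ and $\Wdim_\varepsilon(X|Y,\rho_F)\le\dim P=\cD(\cU^F|Y)\le\sup_{\cV}\cD(\cV^F|Y)$. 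Dividing by $|F|$, taking $\lim_F$, then $\sup_{\varepsilon>0}$, and recognizing $\sup_{\cV}\lim_F\cD(\cV^F|Y)/|F|=\mdim(X|Y)$ (the sup and limit interchange being standard, or simply bounding $\cD(\cU^F|Y)/|F|\le\mdim(X|Y)$ directly from the definition for this particular $\cU$), one obtains $\sup_{\varepsilon>0}\lim_F\Wdim_\varepsilon(X|Y,\rho_F)/|F|\le\mdim(X|Y)$.

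\textbf{Main obstacle.} The only delicate point is the geometric-combinatorial dictionary, and that is already packaged in Lemma \ref{conditional bridge}; the rest is bookkeeping with Lebesgue numbers, meshes, and the metrics $\rho_F$. One should be slightly careful that in the second inequality the final supremum over $\cV$ can be absorbed: since $\cU$ was chosen depending only on $\varepsilon$ (not on $F$), the bound $\Wdim_\varepsilon(X|Y,\rho_F)\le\cD(\cU^F|Y)$ holds for a fixed cover $\cU$, so after dividing by $|F|$ and taking $\lim_F$ we land at $\lim_F\cD(\cU^F|Y)/|F|\le\mdim(X|Y)$ directly, with no interchange of limit and supremum needed. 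This is the cleanest route and avoids any subtlety about whether the supremum over covers can be pulled inside the Ornstein–Weiss limit.
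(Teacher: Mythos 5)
Your proposal is correct and follows essentially the same route as the paper: both directions reduce to the single-$F$ comparisons $\cD(\cU^F|Y)\le\Wdim_{\lambda}(X|Y,\rho_F)$ (via a Lebesgue number of $\cU$) and $\Wdim_{\varepsilon}(X|Y,\rho_F)\le\cD(\cU^F|Y)$ (via a cover of mesh $<\varepsilon$), with Lemma \ref{conditional bridge} serving as the dictionary in each case. Your closing remark about fixing $\cU$ depending only on $\varepsilon$ so that no limit/supremum interchange is needed is exactly how the paper's argument implicitly proceeds.
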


\begin{proof}
For the direction $``\leq "$, fix a finite open cover $\cU$ of $X$. Picking a Lebesgue number $\lambda$ of $\cU$ with respect to $\rho$, it suffices to show
$$\cD(\cU^F|Y) \leq \Wdim_{\lambda}(X|Y, \rho_F)$$
for every $F \in \cF(\Gamma)$. Let $f\colon X \to P$ be a continuous map with $\dim (P)= \Wdim_{\lambda}(X|Y, \rho_F)$ such that ${\rm diam} (f^{-1}(p)\cap \pi^{-1}(y), \rho_F) < \lambda$ for every $(p, y) \in P\times Y$. By choice of $\lambda$,  we have that $\{f^{-1}(p)\cap \pi^{-1}(y)\}_{(p, y)}$ refines $\cU^F$.  Applying Lemma \ref{conditional bridge} to $\cU^F$, it follows that $\cD(\cU^F|Y) \leq \dim (P) =\Wdim_{\lambda}(X|Y, \rho_F)$.

Now we show the converse direction of the equality. Fix $\varepsilon >0$ and pick a finite open cover  $\cU$ of $X$ consisting of some open sets of the diameter less than $\varepsilon$ under the metric $\rho$. It reduces to show
$$\Wdim_{\varepsilon}(X|Y, \rho_F) \leq \cD(\cU^F|Y)$$
for each $F \in \cF(\Gamma)$.  Applying Lemma \ref{conditional bridge} to $\cU^F$, we have a continuous map $f\colon X \to P$ with $\dim (P) =\cD(\cU^F|Y)$ such that $\{f^{-1}(p)\cap \pi^{-1}(y)\}_{(p, y)}$ refines $\cU^F$. By choice of $\cU$, we see that $f$ is a $(\rho_F, \varepsilon)$-embedding relative to $Y$. Thus
$$\Wdim_{\varepsilon}(X|Y, \rho_F) \leq \dim (P)= \cD(\cU^F|Y).$$

\end{proof}

\subsection{$G$-extensions}
Let us compute the conditional mean dimension of  $G$-extensions.
\begin{definition}{\cite[Page 411]{Bowen71}} \label{G-extension}
Let $\pi\colon X \to Y$ be a factor map and $\Gamma \curvearrowright G$ be another dynamical system. $X$  is called a {\it $G$-extension of $Y$} if there exists a continuous map $ X \times G \to X$ sending $(x, g)$ to $xg$ such that for any $x\in X, g, g' \in G$ and $t \in \Gamma$, we have
\begin{enumerate}
    \item $\pi^{-1}(\pi(x))=xG$;
    \item $xg=xg'$ exactly when $g=g'$;
    \item $t(xg)=(tx)(tg)$.
\end{enumerate}
\end{definition}
Note that  when $G$ is a group and the action  $\Gamma \curvearrowright G$ is trivial, the factor map $\pi$ recovers as a principal group extension.

\begin{example}
Let $\Gamma \curvearrowright Y$ and $\Gamma \curvearrowright G$ be two dynamical systems such that $G$ is a compact group and $\Gamma$ acts on $G$ by continuous automorphisms.  A (continuous) {\it cocycle} is a continuous map $\sigma\colon \Gamma \times Y \to G$ such that
$$\sigma(st, y)=\sigma(s, ty)\cdot s(\sigma(t, y))$$
for every $s, t \in \Gamma$ and $y \in Y$.
It induces an action of $\Gamma$ on  $Y \times G$ by
$$s(y, g):=(sy, \sigma(s,y)\cdot (sg))$$
for all $s \in \Gamma, y \in Y$ and $g\in G$.  Then $Y\times G$ is a $G$-extension of $Y$ in light of the map $(Y \times G) \times G \to Y\times G$ sending $((y, g), h)$ to $(y, gh)$. We denote by $Y\times_\sigma G$ the $G$-extension from such a cocycle $\sigma$.

Another source of $G$-extension arise when the underlying systems have group structure. Recall that a dynamical system $\Gamma \curvearrowright X$ is called an {\it algebraic action} if $X$ is a compact metrizable group and the action of $\Gamma$ on $X$ is by continuous automorphisms.  Let $\pi\colon X \to Y$ be a factor map between algebraic actions such that $\pi$ is a group homomorphism. Put $G=\ker (\pi)$. Then $X$ is a $G$-extension given by sending $(x, g) \in X \times G$ to $xg$.
\end{example}

\begin{proposition} \label{group extension}
Let $X$ be a $G$-extension of $Y$ for some compact metrizable space $G$. Then $\mdim(X|Y)\geq \mdim(G)$. If $\pi\colon X \to Y$ admits a continuous {\it section} $\tau \colon Y \to X$ in the sense that $\tau$ is continuous such that $\pi \circ \tau=\id_Y$, we have $\mdim(X|Y)= \mdim(G)$.
\end{proposition}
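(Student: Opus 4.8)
I would prove the two parts separately: the inequality $\mdim(X|Y)\ge\mdim(G)$ by passing to the fibre product and reducing to Proposition~\ref{generalization}, and the equality (when a section exists) by a metric comparison built on the section together with Proposition~\ref{metric approach}.

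\emph{Lower bound.} Put $W:=X\times_Y X=\{(x,x')\in X\times X:\pi(x)=\pi(x')\}$, with the diagonal $\Gamma$-action and the coordinate projections $p_1,p_2\colon W\to X$, which are factor maps. The map $\Phi\colon X\times G\to W$, $(x,g)\mapsto(x,xg)$, is a continuous bijection by conditions (1) and (2) of Definition~\ref{G-extension}, hence a homeomorphism ($X\times G$ is compact, $W$ is Hausdorff); by condition (3) it is $\Gamma$-equivariant for the product action on $X\times G$, and $p_1\circ\Phi$ is the projection onto $X$. Since conditional mean dimension depends only on the topological conjugacy class of the factor map, $\mdim(W|X)=\mdim(X\times G|X)=\mdim(G)$, the last equality by Proposition~\ref{generalization}. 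It remains to show $\mdim(W|X)\le\mdim(X|Y)$. The key points are: (i) the covers $p_1^{-1}(\cU)\vee p_2^{-1}(\cU)$, for $\cU$ a finite open cover of $X$, are cofinal among finite open covers of $W$ (take $\cU$ of small mesh and use a Lebesgue number in $W$); (ii) since $\{p_1^{-1}(x)\}_{x\in X}$ already refines $p_1^{-1}(\cU^F)$, the $p_1$-factor is absorbed, so $\cD((p_1^{-1}\cU\vee p_2^{-1}\cU)^F|X)=\cD((p_2^{-1}\cU)^F|X)$; (iii) by equivariance of $p_2$, $\cD((p_2^{-1}\cU)^F|X)=\cD(p_2^{-1}(\cU^F)|X)\le\cD(\cU^F|Y)$, because if $\cW$ is a finite open cover of $X$ with $\{\pi^{-1}(y)\}_y\vee\cW$ refining $\cU^F$ and $\ord(\cW)=\cD(\cU^F|Y)$, then $p_2^{-1}(\cW)$ has order $\le\ord(\cW)$ and $\{p_1^{-1}(x)\}_x\vee p_2^{-1}(\cW)$ refines $p_2^{-1}(\cU^F)$, since $p_1^{-1}(x)\cap p_2^{-1}(W)=\{x\}\times(\pi^{-1}(\pi(x))\cap W)$ is contained in $p_2^{-1}(U)$ for the $U\in\cU^F$ with $\pi^{-1}(\pi(x))\cap W\subseteq U$. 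Dividing by $|F|$ and letting $F$ become invariant finishes this direction.

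\emph{Equality with a section.} Fix a compatible metric $\rho$ on $X$ and set $\bar\rho_G(g,g'):=\sup_{x\in X}\rho(xg,xg')$; this is a compatible metric on $G$ (finite and continuous since $X$ is compact, positive by condition (2), and compatible because a continuous metric on a compact metrizable space induces its topology). Let $\theta\colon X\to G$ send $x$ to the unique $g$ with $\tau(\pi(x))g=x$; it is continuous and restricts on each fibre $\pi^{-1}(y)$ to the inverse of $g\mapsto\tau(y)g$. Given $\varepsilon>0$ and $F\in\cF(\Gamma)$, pick a continuous map $h\colon G\to P$ to a polyhedron realizing $\dim P=\Wdim_{\varepsilon/2}(G|\{\mathrm{pt}\},(\bar\rho_G)_F)$, i.e. with ${\rm diam}(h^{-1}(p),(\bar\rho_G)_F)<\varepsilon/2$ for all $p$, and put $f:=h\circ\theta$. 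Then $f^{-1}(p)\cap\pi^{-1}(y)=\{\tau(y)g:g\in h^{-1}(p)\}$, and for $g,g'\in h^{-1}(p)$ condition (3) gives
\[
\rho_F(\tau(y)g,\tau(y)g')=\max_{s\in F}\rho((s\tau(y))(sg),(s\tau(y))(sg'))\le\max_{s\in F}\bar\rho_G(sg,sg')=(\bar\rho_G)_F(g,g')<\varepsilon/2,
\]
so $f$ is a $(\rho_F,\varepsilon)$-embedding relative to $Y$, whence $\Wdim_\varepsilon(X|Y,\rho_F)\le\Wdim_{\varepsilon/2}(G|\{\mathrm{pt}\},(\bar\rho_G)_F)$ for all $F$. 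Dividing by $|F|$, letting $F$ become invariant, and taking the supremum over $\varepsilon$, Proposition~\ref{metric approach} (applied to $\pi$, and to the one-point factor of $G$) gives $\mdim(X|Y)\le\mdim(G)$; with the lower bound this yields the equality.

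\emph{Main obstacle.} The delicate part is the lower bound: because the $\Gamma$-action carries points from one fibre of $\pi$ to another, no single fibre carries a natural copy of the system $\Gamma\curvearrowright G$, so $\mdim(G)$ cannot be compared with $\mdim(X|Y)$ fibrewise. Passing to $W=X\times_Y X$ is precisely the device that trivializes the $G$-extension canonically and $\Gamma$-equivariantly; after that the remaining work is the cofinality/absorption bookkeeping and the cover push-forward estimate in (iii). On the upper-bound side the only subtlety is that one must use the uniform fibre metric $\bar\rho_G$ rather than an arbitrary compatible metric on $G$, which is legitimate exactly because Proposition~\ref{metric approach} holds for every compatible metric.
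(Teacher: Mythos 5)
Your argument is correct, but it takes a genuinely different route from the paper, most visibly in the lower bound. The paper proves $\mdim(X|Y)\geq\mdim(G)$ directly and fibrewise: fixing $x_0\in X$, it uses compactness to get, for each $\varepsilon>0$, a $\delta>0$ with $\rho_X(xg,xg')<\delta$ (for some $x$) forcing $\rho_G(g,g')<\varepsilon$, and then shows that composing a $(\rho_{X,F},\delta)$-embedding relative to $Y$ with $g\mapsto x_0g$ yields a $(\rho_{G,F},\varepsilon)$-embedding of $G$, so $\Wdim_\varepsilon(G,\rho_{G,F})\leq\Wdim_\delta(X|Y,\rho_{X,F})$. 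Note that this works even though $g\mapsto x_0g$ is not equivariant, because the dynamics enter only through the metrics $\rho_{G,F}$ and $\rho_{X,F}$; so the ``main obstacle'' you describe is not actually an obstruction to a fibrewise comparison. Your alternative --- trivializing via the fibre product $W=X\times_Y X\cong X\times G$ and reducing to Proposition \ref{generalization} through the cover estimates (i)--(iii) --- is valid (the absorption and push-forward steps check out, and $\cD(\cdot|X)$ is monotone under refinement so cofinal covers suffice), and it has the merit of being purely cover-theoretic and of exhibiting the canonical equivariant trivialization, at the cost of more bookkeeping. For the equality under a section, your construction $f=h\circ\theta$ is essentially the paper's $\psi=\varphi(g_x)$; the difference is that the paper keeps an arbitrary compatible metric $\rho_G$ on $G$ and uses a uniform-continuity $\delta$, whereas you replace $\rho_G$ by the uniform fibre metric $\bar\rho_G(g,g')=\sup_x\rho(xg,xg')$, correctly justifying its compatibility, which makes the comparison exact and removes the $\delta$. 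Two small points: the continuity of $\theta$ (equivalently of $x\mapsto g_x$) deserves a line --- e.g.\ it follows from continuity of $\Phi^{-1}$ for your homeomorphism $\Phi(x,g)=(x,xg)$ --- though the paper is equally terse here; and in step (iii) you use $W$ both for the fibre product and for an element of $\cW$, which should be disambiguated.
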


\begin{proof}
Fix $F \in \cF(\Gamma)$. Let  $\rho_X, \rho_G$ be two compatible metrics on $X$ and $G$ respectively.

Pick a point  $x_0$ from $X$.
By definition of $G$-extension, for any $x \in G$ and $g, g' \in G$, $xg=xg'$ exactly when $g=g'$. Thus by compactness of $X$ and $G$, for any $\varepsilon > 0$ there exists  $\delta > 0$ satisfying the following property: for any $g, g' \in G$ such that $\rho_X(xg, xg') < \delta$ for some $x \in X$, we have
\begin{align} \label{1}
   \rho_G(g, g') < \varepsilon.
\end{align}
 Let $\psi\colon X \to P$ be a $(\rho_{X, F}, \delta)$-embedding relative to $\pi$. Then for any $g, g' \in G$ with $\psi(x_0g)=\psi(x_0g')$, since $\pi(x_0g)=\pi(x_0)=\pi(x_0g')$, we have $\rho_{X, F}(x_0g, x_0g') < \delta$.
By inequality (\ref{1}), we obtain $\rho_{G, F}(g, g') < \varepsilon$. Denote by $\varphi$ the map $G \to X$ sending $g$ to $x_0g$. This concludes that the map $\psi \circ \varphi$ is a $(\rho_{G, F}, \varepsilon)$-embedding. The desired inequality then follows from a limit argument.

Now we assume that $\pi$ admits a continuous cross section $\tau\colon Y \to X$. For any $\varepsilon > 0$, there exists $\delta > 0$ such that
$$\rho_X(xg, xg') < \varepsilon$$
for any $x \in X$ and $g, g' \in G$ such that $\rho_G(g, g') < \delta$.

 Assume that $\varphi\colon G \to Q$ is a $(\rho_{G, F}, \delta)$-embedding for some polyhedron $Q$. For each $x \in X$, since $\pi ( \tau (\pi(x)))=\pi(x)$, we have $x, \tau(\pi(x)) \in \pi^{-1}(\pi(x))=xG$ and hence $x=\tau(\pi(x))g_x$ for a unique $g_x \in G$. Now define $\psi\colon X \to Q$ by sending $x$ to $\varphi(g_x)$. Then the continuity of $\psi$ is guaranteed by the continuity of $\tau$. For any $x, x' \in X$ with the same image under $\pi$ and $\psi$, since $\varphi$ is a $(\rho_{G, F}, \delta)$-embedding, we have $\rho_G(sg_x, sg_{x'})< \delta$ for any $s \in F$. By the design of $\delta$, we have
\begin{align*}
    \rho_X(sx, sx')&=\rho_X(s(\tau(\pi(x))g_x), s(\tau(\pi(x'))g_{x'})) \\
                   &=\rho_X((s(\tau(\pi(x))))(sg_x), (s(\tau(\pi(x))))(sg_{x'}))) <\varepsilon.
\end{align*}
That implies that $\Wdim_\varepsilon(X|Y, \rho_{X, F}) \leq \dim (Q)$. The inequality then follows by running some limit argument.
\end{proof}

\begin{example}
Let $\bZ\Gamma$ be the integral group ring of $\Gamma$ and $f \in \bZ\Gamma$ (see \cite{S95} for more details about group rings). Consider that $\Gamma$ acts on $(\bR/\bZ)^\Gamma$ by left shift. Let $R(f)\colon X:=(\bR/\bZ)^\Gamma \to (\bR/\bZ)^\Gamma$ be the group homomorphism sending $x$ to $xf$. Set $G:=\ker (R(f))$. Then  the induced factor map $\pi_f\colon X \to Y:=\Ima(R(f))$ shows that $X$ is a $G$-extension of $Y$.
Suppose that $fuf=f$ for some $u \in \bZ \Gamma$. Then $\pi_f$ admits a continuous section $Y \to X$ by sending $y$ to $yu$. From Proposition \ref{group extension}, we have $\mdim(X|Y)=\mdim(G)$.
\end{example}

\subsection{Embedding problem}
By a {\it dynamical emmbedding} $\varphi \colon X \to Y$ between two dynamical systems $\Gamma \curvearrowright X$ and $\Gamma \curvearrowright Y$, we mean $\varphi$ is a continuous injective map such that $\varphi(sx)=s\varphi(x)$ for every $s \in \Gamma$ and $x \in X$. With the help of conditional mean dimension, we can strengthen some results regarding the dynamical embedding problem in terms  of Rokhlin dimension \cite[Theorem 3.1]{GQS18}.

The notion of Rokhlin dimension appears in the classification of $C^*$-algebras induced from topological dynamical systems. The definition in the setting of topological dynamical systems is due to Winter, explicitly formulated by Szabo for $\bZ^k$-actions \cite[Definition 2.1]{S15}, and extended to the action of residually finite groups by Szabó, Wu, and Zacharias \cite{SWZ19}. In particular, Rokhlin dimension of infinite finitely generated nilpotent group actions is estimated in \cite[Corollary 8.5]{SWZ19}. 

Now we can consider the definition for the action of amenable groups.
\begin{definition}
Let $\Gamma \curvearrowright X$ be a continuous action by a countable amenable group $\Gamma$ on a compact metrizable space $X$. We say $\Gamma \curvearrowright X$ has {\it Rokhlin dimension $d$}, denoted as 
$$\dim_{\rm Rok}(X, \Gamma)=d,$$
if $d$ is the smallest nonnegative integer such that for every finite subset $K$ of $\Gamma$ and every $\delta >0$, there exists $(d+1)$ subsets $F_0, \cdots, F_d \in \cB(K, \delta)$ and $(d+1)$ open sets $U_0,\cdots, U_d$ satisfying that:\\
$(i)$ the subsets $\{s\overline{U_i}\}_{s \in F_i}$ are pairwise disjoint for every $i=0,\cdots, d$;\\
$(ii)$ the union $\cup_{i=0}^d \sqcup_{s \in F_i} sU_i$ covers the whole space $X$.
\end{definition}

Observe that this definition allows the distinct towers $F_iU_i$'s  to overlap and  Rokhlin dimension does not increase when passing to the extension systems. 

With the help of the notion of conditional mean dimension, we can improve the statement of  \cite[Theorem 3.1]{GQS18} in the following. The proof of  \cite[Theorem 3.1]{GQS18} works here by applying  Lemma \ref{conditional embedding approx} as a conditional version of \cite[Lemma 2.1]{GT14}.
\begin{theorem}\label{conditional embedding}
 Let $D$ be a nonnegative integer and $L$ a positive integer. Suppose that $\pi \colon X \to Y$ is a factor map with $\dim_{{\rm Rok}}(Y, \Gamma)=D$ and $\mdim(X|Y) < L/2$. Then there exists a dynamical embedding from $X$ to $\left( ([0,1]^{(D+1)L})^\Gamma  \right) \times Y$ where the later is endowed with the product action from the shift action on $([0,1]^{(D+1)L})^\Gamma$ and the action $\Gamma \curvearrowright Y$.  
\end{theorem}

The following lemma is a conditional version of \cite[Lemma 2.1]{GT14}, whose proof works here. 
\begin{lemma} \label{conditional embedding approx}
Let $\pi \colon X \to Y$ be a continuous map between compact metrizable space with a compatible metric $\rho$ on $X$.  Suppose that $f\colon X \to [0, 1]^L$ is a continuous map such that $||f(x)-f(x')||_\infty < \delta$ for every $x, x' \in X$ with $\rho(x, x') < \varepsilon$. Assume that $\Wdim_\varepsilon(X|Y, \rho) < L/2$. Then there exists a $(\rho,\varepsilon)$-embedding $g \colon X \to [0, 1]^L$ relative to $Y$ satisfying that 
$\sup_{x \in X} ||f(x)-g(x)||_\infty < \delta.$
\end{lemma}

Combining Theorem \ref{conditional embedding} with Proposition \ref{group extension}, we obtain the following dynamical embedding result for $G$-extensions.

\begin{corollary} \label{G-extension embedding}
Let $X$ be a $G$-extension of $Y$ for some compact metrizable space $G$. Suppose that the factor map $\pi \colon X \to Y$ admits a continuous section. Assume that $\mdim(G) < L/2$ and $\dim_{\rm Rok}(Y, \Gamma)=D$ for some positive integer $L$ and nonnegative integer $D$. Then there exists a dynamical embedding from $X$ to $([0,1]^{(D+1)L})^\Gamma \times Y$.
\end{corollary}

\subsection{Mean dimension of fibers}

Given a finite open cover $\cU$ of $X$, for any closed subset $K$ of $X$, denote by $\cU|_K$ the finite open cover of $K$ restricted from $\cU$, i.e. $\cU|_K=\{U\cap K: U \in \cU\}$. Taking advantage of $\Gamma$-invariance of $X$,  we can similarly consider the mean dimension of $K$ as \cite[Section 1.5]{Gromov99T} and Tsukamoto \cite[Remark 4.7]{Tsukamoto08}.

\begin{definition} \label{mdim def of closed sets}
Fix a {\it F{\o}lner sequence} $\cF:=\{F_n\}_{n\geq 1}$ of $\Gamma$, i.e. for any $s \in \Gamma$, $|sF_n\Delta F_n|/|F_n|$ converges to $0$ as $n$ goes to the infinity.  We define the {\it mean dimension of $K$ }  as
$$\mdim(K, \Gamma):=\sup_\cU \varliminf_{n \to \infty} \frac{\cD(\cU^{F_n}|_K)}{|F_n|},  $$
where $\cU$ ranges over all finite open covers of $X$.

\end{definition}

By the same  argument of Proposition \ref{metric approach},  we have

\begin{proposition} \label{restricted metric approach}
Fix a compatible metric $\rho$ on $X$. We have
$$\mdim(K, \Gamma)=\sup_{\varepsilon > 0} \varliminf_{n \to \infty} \frac{\Wdim_\varepsilon(K, \rho_{F_n})}{|F_n|}.$$
\end{proposition}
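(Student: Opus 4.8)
The plan is to reproduce, almost verbatim, the argument of Proposition~\ref{metric approach}, specialized to the case where the factor is a point and carried out on the compact metrizable space $K$ (with the restricted metrics $\rho_{F_n}$) in place of $X$. The single structural ingredient used there was Lemma~\ref{conditional bridge}, which converts the combinatorial invariant $\cD(\cdot)$ into the existence of a map into a polyhedron of a prescribed dimension whose fibers refine a given cover. Here I would instead invoke its absolute predecessor \cite[Proposition~2.4]{LW00} --- equivalently, the singleton case of Lemma~\ref{conditional bridge} --- applied to $K$: for a finite open cover $\cV$ of $K$, one has $\cD(\cV)\le k$ if and only if there is a continuous map $f\colon K\to P$ into a polyhedron with $\dim(P)=k$ and $\{f^{-1}(p)\}_{p\in P}$ refining $\cV$. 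The one point to track is that $K$ is in general not $\Gamma$-invariant, so there are no dynamical covers intrinsic to $K$; throughout one works with the restrictions $\cU^{F_n}|_K$ of dynamical covers of $X$, which is exactly the data entering the definition of $\mdim(K)$, together with the restriction of $\rho_{F_n}$ to $K$.

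For the direction $``\leq "$, fix a finite open cover $\cU$ of $X$ and let $\lambda>0$ be a Lebesgue number of $\cU$ with respect to $\rho$. I claim $\cD(\cU^{F_n}|_K)\le\Wdim_\lambda(K,\rho_{F_n})$ for every $n$. Indeed, choosing a continuous $f\colon K\to P$ with $\dim(P)=\Wdim_\lambda(K,\rho_{F_n})$ and ${\rm diam}(f^{-1}(p),\rho_{F_n})<\lambda$ for all $p$, for each $s\in F_n$ the set $sf^{-1}(p)$ has $\rho$-diameter $<\lambda$, hence lies in some $U_s\in\cU$; therefore $f^{-1}(p)\subseteq\bigl(\bigcap_{s\in F_n}s^{-1}U_s\bigr)\cap K\in\cU^{F_n}|_K$, so $\{f^{-1}(p)\}_p$ refines $\cU^{F_n}|_K$, and \cite[Proposition~2.4]{LW00} gives $\cD(\cU^{F_n}|_K)\le\dim(P)$. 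Dividing by $|F_n|$, passing to $\varlimsup_{n\to\infty}$, and then taking the supremum over all finite open covers $\cU$ of $X$ yields $\mdim(K)\le\sup_{\varepsilon>0}\varlimsup_{n\to\infty}\Wdim_\varepsilon(K,\rho_{F_n})/|F_n|$, since each $\lambda=\lambda(\cU)$ is one admissible value of $\varepsilon$.

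For the direction $``\geq "$, fix $\varepsilon>0$ and pick a finite open cover $\cU$ of $X$ all of whose members have $\rho$-diameter $<\varepsilon$. Applying \cite[Proposition~2.4]{LW00} to $\cU^{F_n}|_K$ produces a continuous $f\colon K\to P$ with $\dim(P)=\cD(\cU^{F_n}|_K)$ and $\{f^{-1}(p)\}_p$ refining $\cU^{F_n}|_K$; writing $f^{-1}(p)\subseteq\bigl(\bigcap_{s\in F_n}s^{-1}U_s\bigr)\cap K$ with $U_s\in\cU$, we get $sf^{-1}(p)\subseteq U_s$ and hence ${\rm diam}(sf^{-1}(p),\rho)<\varepsilon$ for every $s\in F_n$, i.e. ${\rm diam}(f^{-1}(p),\rho_{F_n})<\varepsilon$. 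Thus $f$ witnesses $\Wdim_\varepsilon(K,\rho_{F_n})\le\dim(P)=\cD(\cU^{F_n}|_K)$; dividing by $|F_n|$, taking $\varlimsup_{n\to\infty}$, and then $\sup_{\varepsilon>0}$ gives the reverse inequality.

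I do not expect a genuine obstacle: the proof is a transcription of Proposition~\ref{metric approach} with the factor trivial and $X$ replaced by $K$, and since $\mdim(K)$ and the right-hand side are both defined as $\varlimsup_{n\to\infty}$ over the same fixed F{\o}lner sequence $\{F_n\}$, the comparison is term-by-term and the Ornstein--Weiss lemma is not needed. The only thing requiring mild attention is the non-invariance of $K$ flagged above, which forces one to phrase everything via $\cU^{F_n}|_K$ rather than via an action on $K$; this is harmless because both expressions in the statement already use precisely this data.
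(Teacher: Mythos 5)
Your proposal is correct and is essentially the paper's intended argument: the paper proves Proposition \ref{restricted metric approach} by simply citing ``the same argument of Proposition \ref{metric approach}'', and your transcription --- using \cite[Proposition 2.4]{LW00} (the singleton case of Lemma \ref{conditional bridge}) on $K$ with the restricted covers $\cU^{F_n}|_K$ and metrics $\rho_{F_n}$, Lebesgue numbers for one direction and small-diameter covers for the other --- is exactly that specialization, with the non-invariance of $K$ correctly handled.
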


In particular, considering the fibers of a factor map $\pi\colon X \to Y$, in light of metric approach formulas in Propositions \ref{metric approach} and \ref{restricted metric approach},  we have the following estimation.

\begin{proposition} \label{fiber as lower bound}
For every $y \in Y$, we have $\mdim(\pi^{-1}(y), \Gamma) \leq \mdim(X|Y)$.
\end{proposition}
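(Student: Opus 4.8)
The plan is to compare the two metric formulas: the one for $\mdim(X|Y)$ from Proposition \ref{metric approach} and the one for $\mdim(\pi^{-1}(y))$ from Proposition \ref{restricted metric approach}, both taken with respect to a fixed compatible metric $\rho$ on $X$ (using the restriction of $\rho$ to $\pi^{-1}(y)$ in the latter). Since the Følner sequence $\cF=\{F_n\}$ is cofinal in the net $\Lambda$ of $(K,\delta)$'s, the limit $\lim_F \Wdim_\varepsilon(X|Y,\rho_F)/|F|$ along the net equals the limit $\lim_{n\to\infty}\Wdim_\varepsilon(X|Y,\rho_{F_n})/|F_n|$ along the Følner sequence, and in particular dominates the $\varlimsup_{n\to\infty}$. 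So it suffices to prove, for each fixed $\varepsilon>0$, each fixed $y\in Y$, and each fixed $F\in\cF(\Gamma)$, the inequality
\begin{align*}
\Wdim_\varepsilon(\pi^{-1}(y),\rho_F) \leq \Wdim_\varepsilon(X|Y,\rho_F).
\end{align*}

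The key step is a straightforward restriction argument. Let $f\colon X\to P$ be a $(\rho_F,\varepsilon)$-embedding relative to $Y$ realizing $\Wdim_\varepsilon(X|Y,\rho_F)=\dim(P)$, i.e. $\mathrm{diam}(f^{-1}(p)\cap\pi^{-1}(y'),\rho_F)<\varepsilon$ for every $(p,y')\in P\times Y$. Consider the restriction $g:=f|_{\pi^{-1}(y)}\colon \pi^{-1}(y)\to P$. For every $p\in P$ one has $g^{-1}(p)=f^{-1}(p)\cap\pi^{-1}(y)$, whose $\rho_F$-diameter is strictly less than $\varepsilon$ by hypothesis (taking $y'=y$). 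Hence $g$ is a $(\rho_F,\varepsilon)$-embedding of $\pi^{-1}(y)$ into the polyhedron $P$, so $\Wdim_\varepsilon(\pi^{-1}(y),\rho_F)\leq\dim(P)=\Wdim_\varepsilon(X|Y,\rho_F)$, as desired.

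Dividing by $|F_n|$, passing to $\varlimsup_{n\to\infty}$ on the left and to $\lim_F$ on the right (which, being a genuine net limit, bounds the Følner $\varlimsup$ from above), and finally taking the supremum over $\varepsilon>0$ on both sides, yields $\mdim(\pi^{-1}(y))\leq\mdim(X|Y)$.

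I do not anticipate a serious obstacle here; the only points demanding a line of care are the interplay between the net limit $\lim_F$ (used in Proposition \ref{metric approach}) and the Følner-sequence $\varlimsup$ (used in Proposition \ref{restricted metric approach}) — which is resolved by cofinality of any Følner sequence in $\Lambda$ — and the trivial observation that a map exhibiting small fiber-diameters on every fiber $f^{-1}(p)\cap\pi^{-1}(y')$ in particular does so on the single fiber over the chosen $y$. The fact that the metric formula for $\mdim(X|Y)$ is independent of the compatible metric (Proposition \ref{metric approach}) lets us use the same $\rho$ on both sides without loss of generality.
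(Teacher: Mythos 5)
Your proof is correct and follows exactly the route the paper intends: the paper derives Proposition \ref{fiber as lower bound} "in light of" Propositions \ref{metric approach} and \ref{restricted metric approach}, i.e.\ precisely your restriction argument showing $\Wdim_\varepsilon(\pi^{-1}(y),\rho_F)\leq\Wdim_\varepsilon(X|Y,\rho_F)$ and then passing to limits. Your extra care about reconciling the net limit with the F{\o}lner-sequence $\varlimsup$ is a sound (and welcome) filling-in of a detail the paper leaves implicit.
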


By a modified argument of Proposition \ref{group extension}, we have
\begin{proposition} \label{fiber inequality}
Let $ X$ be a $G$-extension of $Y$. Then $\mdim(G) = \mdim(\pi^{-1}(y), \Gamma)$ for every $y \in Y$.
\end{proposition}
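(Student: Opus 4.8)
The plan is to follow the pattern of the proof of Proposition~\ref{group extension}, but with the ambient system replaced by a single fiber, using the metric description of $\mdim(\pi^{-1}(y))$ from Proposition~\ref{restricted metric approach} and the metric description of $\mdim(G)$ from Proposition~\ref{metric approach} with the factor taken to be a point. First I would fix $y\in Y$, pick $x_{0}\in\pi^{-1}(y)$, and observe that by conditions (1) and (2) of Definition~\ref{G-extension} the map $\varphi\colon G\to\pi^{-1}(y)$, $g\mapsto x_{0}g$, is a continuous bijection, hence (compact source, Hausdorff target) a homeomorphism; since $\pi^{-1}(y)$ is closed in $X$, $\mdim(\pi^{-1}(y))$ is defined. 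Fix compatible metrics $\rho_{X}$ on $X$, $\rho_{G}$ on $G$, and let $\cF=\{F_{n}\}$ be the ambient F{\o}lner sequence.

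For ``$\leq$'': by uniform continuity of the (compact) action map $X\times G\to X$, for each $\varepsilon>0$ there is $\delta>0$ with $\rho_{G}(g,g')<\delta \Rightarrow \rho_{X}(xg,xg')<\varepsilon$ for all $x\in X$. Given $F\in\cF(\Gamma)$ and a $(\rho_{G,F},\delta)$-embedding $\psi\colon G\to Q$ realizing $\Wdim_{\delta}(G,\rho_{G,F})$, I would check that $\psi\circ\varphi^{-1}\colon\pi^{-1}(y)\to Q$ is a $(\rho_{X,F},\varepsilon)$-embedding: if $x_{0}g$ and $x_{0}g'$ have the same image then $\rho_{G}(sg,sg')<\delta$ for all $s\in F$, whence, using condition (3),
$$\rho_{X}\big(s(x_{0}g),s(x_{0}g')\big)=\rho_{X}\big((sx_{0})(sg),(sx_{0})(sg')\big)<\varepsilon\qquad(s\in F),$$
i.e.\ $\rho_{X,F}(x_{0}g,x_{0}g')<\varepsilon$. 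Hence $\Wdim_{\varepsilon}(\pi^{-1}(y),\rho_{X,F})\le\Wdim_{\delta}(G,\rho_{G,F})$; specializing to $F=F_{n}$, dividing by $|F_{n}|$, and passing to $\varlimsup_{n}$ and then $\sup_{\varepsilon}$ gives $\mdim(\pi^{-1}(y))\le\mdim(G)$ (using that for a function as in Lemma~\ref{OW} the limit along the net $\Lambda$ agrees with the limit along $\cF$).

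For ``$\geq$'': as for inequality~(\ref{1}) in the proof of Proposition~\ref{group extension}, compactness and condition (2) give, for each $\varepsilon>0$, a $\delta>0$ with $\rho_{X}(xg,xg')<\delta \Rightarrow \rho_{G}(g,g')<\varepsilon$ for all $x\in X$. Given $F\in\cF(\Gamma)$ and a $(\rho_{X,F},\delta)$-embedding $\psi\colon\pi^{-1}(y)\to P$, the composite $\psi\circ\varphi\colon G\to P$ sends any $g,g'$ with equal image to a pair with $\rho_{X}\big((sx_{0})(sg),(sx_{0})(sg')\big)<\delta$, hence $\rho_{G}(sg,sg')<\varepsilon$, for all $s\in F$; so it is a $(\rho_{G,F},\varepsilon)$-embedding and $\Wdim_{\varepsilon}(G,\rho_{G,F})\le\Wdim_{\delta}(\pi^{-1}(y),\rho_{X,F})$. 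Taking limits as above yields $\mdim(G)\le\mdim(\pi^{-1}(y))$.

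The main obstacle, and the reason this is not immediate from a conjugacy, is that $\pi^{-1}(y)$ carries no $\Gamma$-action (it is $\Gamma$-invariant only when $y$ is fixed), so the two quantities are intrinsically incomparable as dynamical invariants and the comparison has to be run through the Ornstein--Weiss/F{\o}lner machinery with a ``drifting base point'' $sx_{0}$. What rescues the argument is that every continuity and injectivity estimate above can be taken uniform over $x\in X$ by compactness of $X\times G\times G$, which removes any dependence on where $sx_{0}$ sits. The only other point to handle with care is the harmless interchange of $\varlimsup$ along $\cF$ with the net-limit in the definition of $\mdim(G)$, which is legitimate by Lemma~\ref{OW}.
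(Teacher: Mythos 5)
Your proof is correct and is exactly the ``modified argument of Proposition \ref{group extension}'' the paper has in mind: you run the same two uniform compactness estimates (inequality (\ref{1}) and uniform continuity of the action map) through the metric characterizations of Propositions \ref{metric approach} and \ref{restricted metric approach}, with the homeomorphism $g\mapsto x_0g$ of $G$ onto the fiber playing the role that the section plays in the second half of that proof. No gaps beyond routine details (e.g.\ strictness of the diameter bound via compactness of fibers), so nothing further is needed.
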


 We have a satisfactory answer to Question \ref{main interest} in the following case.
\begin{corollary} \label{answer for algebraic actions}
Let $\pi\colon X \to Y$ be a factor map of algebraic actions such that $\pi$ is a group homomorphism. Write $G:=\ker (\pi)$. Then  $\mdim(G)=\mdim(\pi^{-1}(y), \Gamma)$ for every $y \in Y$. In particular, we have
$$\mdim(X) = \mdim(Y)+\sup_{y \in Y} \mdim(\pi^{-1}(y), \Gamma).$$
\end{corollary}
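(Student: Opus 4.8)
The first assertion, that $\mdim(G)=\mdim(\pi^{-1}(y))$ for every $y\in Y$, is exactly Proposition \ref{fiber inequality} applied to the $G$-extension structure on $\pi\colon X\to Y$ coming from the algebraic action: here $X$ is a compact metrizable group, $G=\ker(\pi)$ acts on $X$ by right translation, and conditions (1)--(3) of Definition \ref{G-extension} are immediate since $\pi$ is a surjective group homomorphism with kernel $G$ and the $\Gamma$-action is by automorphisms (so $t(xg)=(tx)(tg)$). So this part requires only citing the earlier result and checking the hypotheses.

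For the displayed equality, the plan is to prove the two inequalities separately. The inequality $\mdim(X)\le \mdim(Y)+\sup_{y}\mdim(\pi^{-1}(y))$ follows immediately from Theorem \ref{subadditivity} combined with Proposition \ref{fiber as lower bound}: indeed $\mdim(X)\le \mdim(Y)+\mdim(X|Y)$, and we must therefore show $\mdim(X|Y)\le \sup_y\mdim(\pi^{-1}(y))$; but by the first part $\mdim(\pi^{-1}(y))=\mdim(G)$ for all $y$, so $\sup_y\mdim(\pi^{-1}(y))=\mdim(G)$, and the point is to show $\mdim(X|Y)=\mdim(G)$. The inequality $\mdim(X|Y)\ge\mdim(G)$ is the first half of Proposition \ref{group extension}. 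For the reverse inequality $\mdim(X|Y)\le\mdim(G)$, I would invoke the second half of Proposition \ref{group extension}, which requires a continuous section $\tau\colon Y\to X$ of $\pi$; such a section exists here because a surjective continuous homomorphism of compact metrizable groups admits a Borel section, and in fact one can do better --- but since we only need the mean-dimension equality, it suffices to note that any continuous section will do, and for algebraic actions one can arrange this (e.g.\ via a local cross-section argument, or by passing through the quotient). If a genuine continuous global section is not available in general, the safer route is to argue $\mdim(X|Y)\le\mdim(G)$ directly by the $(\rho,\varepsilon)$-embedding method: given a $(\rho_{G,F},\delta)$-embedding of $G$ and using that $X\to Y$ is a principal $G$-bundle in the measurable/continuous-local sense, patch local trivializations --- but this is where the main obstacle lies.

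The lower bound direction $\mdim(X)\ge\mdim(Y)+\mdim(G)$ is the substantive half. The natural approach is: since $\pi$ admits a continuous section $\tau$, the map $Y\times G\to X$ sending $(y,g)\mapsto\tau(y)g$ is a continuous bijection, hence a homeomorphism by compactness, and it is $\Gamma$-equivariant only up to the cocycle twist $\sigma(s,y)$; so $X$ is $\Gamma$-equivariantly homeomorphic to the twisted product $Y\times_\sigma G$. Then I would show $\mdim(Y\times_\sigma G)\ge\mdim(Y\times_\sigma\{\mathrm{pt}\})+\mdim(\{\mathrm{pt}\}\times_\sigma G)$ in the spirit of the product formula: fix finite open covers $\cU_0$ of $Y$ and $\cV_0$ of $G$, form $(\cU_0\times\cV_0)$, and estimate $\cD((\cU_0\times\cV_0)^F)$ from below by $\cD(\cU_0^F)+\cD(\cV_0^{F})$ using that any open cover $\cW$ realizing the left side, restricted to fibers and to the section, simultaneously refines $\cU_0^F$ and $\cV_0^F$ --- the cocycle twist does not affect the combinatorics since for fixed $F$ it only relabels which translate of $G$ one sees. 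Combined with $\mdim(Y\times_\sigma\{\mathrm{pt}\})=\mdim(Y)$ and $\mdim(\{\mathrm{pt}\}\times_\sigma G)=\mdim(G)$ (the latter by Proposition \ref{fiber inequality} or a direct twisted-product computation), this gives $\mdim(X)\ge\mdim(Y)+\mdim(G)=\mdim(Y)+\sup_y\mdim(\pi^{-1}(y))$, and the equality follows. The hard part will be ensuring the existence of a continuous section for a homomorphism of algebraic actions --- this is automatic for $\Gamma$-algebraic actions of the form studied here (e.g.\ when $X=(\bR/\bZ)^\Gamma$ and $\pi=R(f)$ with $fuf=f$), but in full generality one may need to restrict to that setting or else replace the section argument with the purely combinatorial fiberwise estimate sketched above.
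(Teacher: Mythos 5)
Your treatment of the first assertion matches the paper: $X$ is a $G$-extension of $Y$, and $\mdim(G)=\mdim(\pi^{-1}(y))$ is Proposition \ref{fiber inequality}. But for the displayed equality your route has two genuine gaps, and it is not the paper's route. First, your upper bound $\mdim(X|Y)\leq \mdim(G)$ leans on the second half of Proposition \ref{group extension}, which requires a \emph{continuous} section of $\pi$; a surjective continuous homomorphism of compact metrizable groups generally admits only a Borel section, not a continuous one (already the doubling map $x\mapsto 2x$ on $\bR/\bZ$, a factor map of algebraic $\bZ$-actions with kernel of two points, has no continuous section), so neither the section argument nor the ``$X\cong Y\times_\sigma G$ as a twisted product'' identification is available in general. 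Second, and more seriously, your lower bound $\mdim(X)\geq \mdim(Y)+\mdim(G)$ is argued by a superadditivity estimate $\cD((\cU_0\times\cV_0)^F)\geq \cD(\cU_0^F)+\cD(\cV_0^F)$; such superadditivity is false for covering/mean dimension in general --- this is exactly the phenomenon behind Boltyanski\u{\i}'s example $\dim(X\times X)<2\dim(X)$ \cite{Boltyanski51}, which the paper's own remark after Theorem \ref{subadditivity} invokes to note that the converse of the subadditivity inequality can fail. A purely topological product-type argument therefore cannot yield the equality.

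The paper's proof is much shorter and uses the algebraic structure in an essential way: it simply cites the addition formula for mean dimension of algebraic actions \cite[Corollary 6.1]{LL15}, which gives $\mdim(X)=\mdim(Y)+\mdim(G)$ exactly (its proof goes through mean rank and von Neumann--L\"{u}ck rank, not through sections or product covers), and then substitutes $\mdim(G)=\sup_{y}\mdim(\pi^{-1}(y))$ from the first part. Your upper-bound half could be salvaged in special cases where a continuous section exists (as in the paper's corollary about $Y\times_\sigma G$), but as written the proposal does not prove either inequality of the equality in the stated generality; the missing ingredient is precisely the addition formula for algebraic actions.
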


\begin{proof}
Clearly $X$ is a $G$-extension of $Y$. Thus the first statement is true from Proposition \ref{fiber inequality}.  By the addition formula for mean dimension of algebraic actions \cite[Corollary 6.1]{LL15}, we have
\begin{align*}
    \mdim(X) &=\mdim(Y)+\mdim(G)\\
    & =\mdim(Y)+\sup_{y \in Y} \mdim(\pi^{-1}(y), \Gamma).
\end{align*}
\end{proof}

\section{Proof of Theorem \ref{subadditivity}}

In this section, we give the proof of Theorem \ref{subadditivity}.

First, we recall the quasi-tiling lemma of amenable groups as follows \cite[Page 24,Theorem 6]{OW87} \cite[Theorem 8.3]{Li12}. In fact, one can require all quasi-tiles contain the identity element $e_\Gamma$ of $\Gamma$.
 Let $\varepsilon > 0$ and $F_1, \cdots, F_m \in \cF(\Gamma)$, we say $\{F_j\}_{j=1}^m$ are {\it $\varepsilon$-disjoint} if there exists $F_j' \subseteq F_j$ for every $j=1, \cdots, m$ such that $\{F_j\}_{j=1}^m$ are pairwise disjoint and $|F_j'| \geq (1-\varepsilon)|F_j|$ for every $j=1, \cdots, m$.
\begin{lemma} \label{quasi-tiling}
Let $\varepsilon > 0$ and $K \in \cF(\Gamma)$. Then there exists $\delta > 0$ and $K', F_1, \cdots, F_m \in \cF(\Gamma)$ such that
\begin{enumerate}
    \item  $e_\Gamma \in F_j \in \cB(K, \varepsilon)$, for all $j=1,\cdots, m$;
\item  For each $A \in \cB(K', \delta)$, there exist $D_1, \cdots, D_m \in \cF(\Gamma)$ such that the family $\{F_jc: c \in D_j, j=1,\cdots, m\}$ are $\varepsilon$-disjoint subsets of $A$, and $|A\setminus \bigcup_{j=1}^m F_j D_j| \leq \varepsilon|A|$.
\end{enumerate}
\end{lemma}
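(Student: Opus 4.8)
This is the Ornstein--Weiss quasi-tiling theorem, and the plan is to prove it by the classical greedy exhaustion argument. I would produce finitely many tiles $F_1,\dots,F_m$ arranged in a hierarchy of invariance and then cover a sufficiently invariant $A$ by packing $\varepsilon$-disjoint translates of these tiles one invariance level at a time, each level deleting a fixed proportion of what remains, so that after $m$ levels the uncovered part falls below $\varepsilon|A|$.

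The engine will be a packing density estimate. Suppose $e_\Gamma \in F$ and $B \in \cF(\Gamma)$, write $B^- = \{c : Fc \subseteq B\}$ for the $F$-interior, and let $\{Fc : c \in D\}$ with $D \subseteq B^-$ be a \emph{maximal} $\varepsilon$-disjoint family of right-translates of $F$ contained in $B$, with union $U$. I claim maximality forces $|Fc' \cap U| > \varepsilon|F|$ for every $c' \in B^-$: otherwise $Fc' \setminus U$ would furnish a fresh core of size $\geq (1-\varepsilon)|F|$ disjoint from the existing ones, letting us enlarge the family. Summing this over $c' \in B^-$ and exchanging the order of summation gives
$$\varepsilon |F|\, |B^-| < \sum_{c' \in B^-} |Fc' \cap U| = \sum_{f \in F} |\{c' \in B^- : fc' \in U\}| \leq |F|\, |U|,$$
so that $|U| > \varepsilon |B^-|$; a maximal $\varepsilon$-disjoint family always covers more than an $\varepsilon$-fraction of the $F$-interior of $B$.

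To organize the iteration I would fix $m$ with $(1-\varepsilon/2)^m < \varepsilon$ and build the tiles recursively in increasing invariance: take $F_1 \in \cB(K,\varepsilon)$ and, having chosen $F_1,\dots,F_j$, choose $F_{j+1} \in \cB(K,\varepsilon)$ so much more invariant than all of $F_1,\dots,F_j$ that the $F_i$-boundary of $F_{j+1}$ is a negligible fraction of $|F_{j+1}|$ for each $i\leq j$. Right-translating each tile to contain an interior point arranges $e_\Gamma \in F_j$ while keeping $F_j \in \cB(K,\varepsilon)$, since $\cB(K,\varepsilon)$ is invariant under right multiplication, giving condition (1). For $A$ sufficiently invariant I would then run the greedy process from the most invariant tile downward: set $R_0 = A$ and at step $k$ let $\{F_{m-k+1}c : c \in D_{m-k+1}\}$ be a maximal $\varepsilon$-disjoint family of translates \emph{entirely contained} in $R_{k-1}$, with union $U_k$ and $R_k = R_{k-1}\setminus U_k$. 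Full containment forces translates from distinct steps to be genuinely disjoint, so the total family stays $\varepsilon$-disjoint. Whenever $|R_{k-1}| \geq \varepsilon|A|$ the packing estimate yields $|U_k| > \varepsilon|R_{k-1}^-| \geq \tfrac{\varepsilon}{2}|R_{k-1}|$, hence $|R_k| < (1-\varepsilon/2)|R_{k-1}|$ (and if the leftover ever drops below $\varepsilon|A|$ I simply take the remaining $D_j$ empty); after $m$ steps $|A \setminus \bigcup_j F_j D_j| = |R_m| < \varepsilon|A|$.

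The hard part will be the interior estimate $|R_{k-1}^-| \geq \tfrac12|R_{k-1}|$ invoked above, because $R_{k-1}$ is itself not invariant. Its boundary splits into the boundary of $A$ (small once $A$ is taken $(K',\delta)$-invariant with $K' \supseteq \bigcup_j F_j^{-1}F_j$ and $\delta$ small) and the $F_{m-k+1}$-boundaries of the earlier unions $U_1,\dots,U_{k-1}$, built from the strictly more invariant tiles $F_m,\dots,F_{m-k+2}$. By the hierarchy each such boundary is a tiny multiple of $|U_i|\leq|A|$; since $m$ is fixed and only the range $|R_{k-1}| \geq \varepsilon|A|$ matters, the ratio $|A|/|R_{k-1}|$ stays below the constant $(1-\varepsilon/2)^{-(m-1)}$, so the finitely many boundary terms together remain under $\tfrac12|R_{k-1}|$ provided the invariance gaps between consecutive tiles and the invariance of $A$ were chosen small enough. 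Pinning down these finitely many invariance thresholds — which is exactly what fixes $K'$, $\delta$, and the recursive demands on $F_1,\dots,F_m$ — is the genuine technical content; the packing estimate and the geometric bookkeeping are then routine.
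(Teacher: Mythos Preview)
The paper does not supply its own proof of this lemma; it merely quotes the Ornstein--Weiss quasi-tiling theorem with references to \cite[Page 24, Theorem 6]{OW87} and \cite[Theorem 8.3]{Li12}, noting only that one may arrange $e_\Gamma \in F_j$ by a right translation. Your sketch is exactly the classical greedy exhaustion argument from those sources---the packing estimate via maximality, the hierarchy of increasingly invariant tiles, and the inductive removal step---so there is nothing substantive to compare.

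Two small remarks. First, the bound you state on $|A|/|R_{k-1}|$ is not quite the right one (the decay $(1-\varepsilon/2)^{k-1}$ gives an \emph{upper} bound on $|R_{k-1}|$, not a lower one); what you actually use, and what suffices, is simply $|A|/|R_{k-1}| \leq 1/\varepsilon$ coming from the standing hypothesis $|R_{k-1}| \geq \varepsilon|A|$. Second, as stated in the paper the $D_j$ lie in $\cF(\Gamma)$ and hence are nonempty, whereas your procedure may leave some $D_j$ empty; this is harmless for the application (the proof of Theorem~\ref{subadditivity} only uses $\sum_j |F_j||D_j| \leq |A|/(1-\varepsilon)$), and is easily patched by inserting a single centre if needed.
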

We call those $F_j$'s {\it quasitiles} of $\Gamma$ and $D_j$'s the {\it tiling centers} of $A$.

\begin{proof} [Proof of Theorem \ref{subadditivity}]
Fix a finite open cover $\cU$ of $X$. Let $ 0 < \varepsilon < 1$ and $K \in \cF(\Gamma)$.  By Lemma \ref{quasi-tiling}, there exist $\delta > 0$, $K' \in \cF(\Gamma)$, and tiles $F_1, \cdots, F_m \in \cF(\Gamma)$,  such that each $A\in \cB(K', \delta) $ admits tiling centers $D_1, \cdots, D_m \in \cF(\Gamma)$ satisfying the conditions in Lemma \ref{quasi-tiling}.

For each $j=1, \cdots, m$ choose a finite open cover $\cW_j$ of $X$ such that $\ord \ (\cW_j) = \cD(\cU^{F_j}|Y)$ and $\cW_j \vee \{\pi^{-1}(y)\}_{y \in Y}$ refines $\cU^{F_j}$. Without loss of generality, we may assume $\cW_j' \vee \{\pi^{-1}(y)\}_y$ still refines $\cU^{F_j}$ for $\cW_j':=\{\overline{W}\}_{W \in \cW_j}$. Then for each $y \in Y$ and $W \in \cW_j$ there exists $U \in \cU^{F_j}$ and an open neighborhood $V_{y, W}$ of $y$ such that
$$\overline{W} \cap \pi^{-1}(V_{y, W}) \subseteq U.$$
By compactness there exists a subfamily $\cV_j$ of $\{ \cap_{W \in \cW_j} V_{y, W}: y \in Y\}$ such that $\cV_j$ still makes an open cover of $Y$. Clearly  $\cW_j \vee \pi^{-1}(\cV_j)$ refines $\cU^{F_j}$.
Put $\cV=\vee_{j=1}^m \cV_j$ (depending only on $\cU$ and $K$). It follows that $\cW_j \vee \pi^{-1}(\cV)$ refines $\cU^{F_j}$ for every $j=1, \cdots, m$.

Now for $A \in \cB(K', \delta)$, choose a finite open cover $\cW_A$ of $Y$ such that $\ord \ (\cW_A) = \cD(\cV^A)$ and $\cW_A$ refines $\cV^A$. Since $F_j$ contains the identity of $\Gamma$, we have $r\cW_A$ refines $\cV$ for every $r \in D_j$ and $j=1, \cdots, m$. By construction of $\cV$, we have $\cW_j \vee \pi^{-1}(r\cW_A)$ refines $\cU^{F_j}$. Hence $(\vee_{j=1}^m \vee_{r \in D_j} r^{-1}\cW_j) \vee \pi^{-1}(\cW_A)=\vee_{j=1}^m \vee_{r \in D_j} r^{-1}(\cW_j \vee \pi^{-1}(r\cW_A))$ refines $\cU^{\cup_j F_jD_j}$.Thus
\begin{align*}
    \cD(\cU^A) & \leq \cD(\cU^{\cup_j F_jD_j})+ \cD(\cU^{A\setminus \cup_j F_jD_j})\\
   & \leq \ord \ ((\vee_{j=1}^m \vee_{r \in D_j} r^{-1}\cW_j) \vee \pi^{-1}(\cW_A)) +\varepsilon |A|\cD(\cU)\\
   & \leq \cD(\cV^A) + \sum_j |D_j|\cD(\cU^{F_j}|Y)+\varepsilon |A|\cD(\cU).
\end{align*}

Since $\{F_jc\}_{j, c}$ are $\varepsilon$-disjoint subsets of $A$, we have
$$\sum_{j=1}^m |F_j||D_j| \leq \frac{|A|}{1-\varepsilon}.$$
It follows that
$$\sum_{j=1}^m |D_j| \cD(\cU^{F_j}|Y) =\sum_{j=1}^m |F_j||D_j| \frac{\cD(\cU^{F_j}|Y)}{|F_j|}
 \leq \frac{|A|}{1-\varepsilon} \sup_{F \in \cB(K, \varepsilon)}  \frac{\cD(\cU^{F}|Y)}{|F|} .$$
 So
 $$\frac{ \cD(\cU^A)}{|A|} \leq \frac{\cD(\cV^A)}{|A|} + \frac{1}{1-\varepsilon}\sup_{F \in \cB(K, \varepsilon)}  \frac{\cD(\cU^{F}|Y)}{|F|} +\varepsilon \cD(\cU).$$
 Since $A \in \cB(K', \delta)$ is arbitrary, we get
 $$\mdim(\cU) \leq \mdim(Y) +\frac{1}{1-\varepsilon}\sup_{F \in \cB(K, \varepsilon)}  \frac{\cD(\cU^{F}|Y)}{|F|} +\varepsilon \cD(\cU)$$
 for $\mdim(\cU):=\lim_F \frac{\cD(\cU^F)}{|F|}$.
 Taking the limit for $K$ and $\varepsilon$, we have
 $$\mdim(\cU) \leq \mdim(Y)+\mdim(X|Y).$$
 Since $\cU$ is arbitrary, this completes the proof.
\end{proof}

\begin{remark}
There is a number of reasons why  the converse inequality in Theorem \ref{subadditivity} can fail.   It is well known that  Cantor set can continuously map onto any compact metrizable space. In particular, for the action of trivial group, we have the converse inequality in Theorem \ref{subadditivity} fails for such a surjective map. Moreover, Boltyanski\u{\i} constructed an example of a compact metrizable space $X$ such that $\dim (X\times X) < 2\dim(X)$  (See \cite{Boltyanski51}). As a consequence, we know that the converse of inequality in Theorem \ref{subadditivity} can fail even for a projection map.
\end{remark}

\begin{corollary}
Let $\Gamma \curvearrowright Y$ be a dynamical system and $\Gamma \curvearrowright G$ an algebraic action. Suppose that
$\sigma: \Gamma \times Y \to G$ is a continuous cocycle. Then for the induced $G$-extension $\Gamma \curvearrowright Y\times_\sigma G$, we have
$$\mdim(Y\times_\sigma G)\leq \mdim(Y)+\sup_{y \in Y} \mdim(\pi^{-1}(y), \Gamma).$$
\end{corollary}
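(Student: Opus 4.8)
The plan is to combine Theorem \ref{subadditivity} with Proposition \ref{fiber inequality} and a computation of $\mdim(G)$ in terms of $\mdim(\pi^{-1}(y))$. First I would apply Theorem \ref{subadditivity} to the factor map $\pi\colon Y\times_\sigma G \to Y$, which immediately gives
\[
\mdim(Y\times_\sigma G) \leq \mdim(Y) + \mdim(Y\times_\sigma G \mid Y).
\]
So it suffices to show that $\mdim(Y\times_\sigma G \mid Y) \leq \sup_{y\in Y}\mdim(\pi^{-1}(y))$. Since $\Gamma\curvearrowright G$ is an algebraic action, $G$ is in particular a compact metrizable space, and $Y\times_\sigma G$ is a $G$-extension of $Y$ by the construction in the Example preceding Proposition \ref{group extension}. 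Hence Proposition \ref{fiber inequality} applies and yields $\mdim(\pi^{-1}(y)) = \mdim(G)$ for every $y\in Y$; in particular $\sup_{y\in Y}\mdim(\pi^{-1}(y)) = \mdim(G)$.

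It therefore remains to verify $\mdim(Y\times_\sigma G \mid Y) = \mdim(G)$. For this I would invoke Proposition \ref{group extension}: it gives $\mdim(Y\times_\sigma G \mid Y)\geq \mdim(G)$ unconditionally, and equality holds as soon as $\pi$ admits a continuous section $\tau\colon Y\to Y\times_\sigma G$. But the cocycle extension $Y\times_\sigma G$ has an obvious continuous section, namely $\tau(y) = (y, e_G)$ where $e_G$ is the identity of $G$ — this is clearly continuous and satisfies $\pi\circ\tau = \id_Y$. Thus $\mdim(Y\times_\sigma G\mid Y) = \mdim(G) = \sup_{y\in Y}\mdim(\pi^{-1}(y))$, and plugging this into the inequality from Theorem \ref{subadditivity} completes the proof.

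I do not anticipate a serious obstacle here: the corollary is essentially an assembly of three previously established results (Theorem \ref{subadditivity}, Proposition \ref{group extension}, Proposition \ref{fiber inequality}) together with the trivial observation that a cocycle extension splits. The only point requiring any care is making sure the hypotheses line up — specifically that $\Gamma\curvearrowright G$ being an algebraic action guarantees $G$ is a compact metrizable space (so that the $G$-extension machinery applies) and that the section $\tau(y)=(y,e_G)$ is genuinely $\Gamma$-equivariant is \emph{not} needed, since Proposition \ref{group extension} only asks for a continuous section with $\pi\circ\tau=\id_Y$, not an equivariant one. (Note also that the statement's "$Y\times_\sigma Z$" is a typo for $Y\times_\sigma G$.) One could alternatively bypass Proposition \ref{fiber inequality} and argue directly via Proposition \ref{fiber as lower bound} in the reverse direction, but going through Proposition \ref{fiber inequality} is cleaner since it gives the exact value $\mdim(\pi^{-1}(y)) = \mdim(G)$.
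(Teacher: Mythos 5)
Your proposal is correct and follows essentially the same route as the paper: apply Theorem \ref{subadditivity}, then identify $\mdim(Y\times_\sigma G\,|\,Y)$ with $\mdim(G)=\mdim(\pi^{-1}(y))$ via Propositions \ref{group extension} and \ref{fiber inequality}, using the obvious continuous section. The only difference is that you spell out the section $\tau(y)=(y,e_G)$ explicitly, which the paper dismisses with ``clearly''.
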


\begin{proof}
Clearly  $Y\times_\sigma G$ admits a continuous cross section.  Thus by  Propositions  \ref{group extension} and \ref{fiber inequality}, we have $\mdim(Y\times_\sigma G|Y)=\mdim(G)=\mdim(\pi^{-1}(y), \Gamma)$ for every $y \in Y$. Applying Theorem \ref{subadditivity}, we have
\begin{align*}
    \mdim(Y\times_\sigma G)&\leq \mdim(Y)+\mdim(Y\times_\sigma G|Y)\\
                           &=\mdim(Y)+\sup_{y \in Y} \mdim(\pi^{-1}(y), \Gamma).
\end{align*}
\end{proof}

\section{conditional metric mean dimension}

In contrast with metric mean dimension, it is natural to consider its conditional version.
For a metrizable space $X$ with a compatible metric $\rho$ and $\varepsilon > 0$, a subset $E \subseteq X$ is called {\it $(\rho, \varepsilon)$-separating} if $\rho(x, x') \geq \varepsilon$ for every distinct $x, x' \in E$. Denote by $N_\varepsilon(X, \rho)$ the maximal cardinality of $(\rho, \varepsilon)$-separating subsets of $X$.

\begin{definition}
Let $\Gamma \curvearrowright X$ be a dynamical system. Fix a compatible metric $\rho$ on $X$. Set
$$N_\varepsilon(X|Y, \rho)=\max_{y \in Y} N_\varepsilon(\pi^{-1}(y), \rho).$$
We define the {\it conditional metric mean dimension of $\Gamma \curvearrowright (X, \rho)$ relative to $\Gamma \curvearrowright Y$} as
$$\mdim_\rM(X|Y, \rho):=\varliminf_{\varepsilon \to 0} \varlimsup_F \frac{\log N_\varepsilon(X|Y, \rho_F)}{|\log\varepsilon||F|}.$$
\end{definition}
Again, when $Y$ is a singleton, $\mdim_\rM(X|Y, \rho)$ recovers as the metric mean dimension of $\Gamma \curvearrowright (X, \rho)$, which we
denote by $\mdim_\rM(X, \rho)$ (see \cite[Definition 4.1]{LW00}).

\begin{remark}
Recall that the {\it mesh} of a finite open cover $\cU$ for $(X, \rho)$ is defined by
$${\rm mesh}(\cU, \rho):=\max_{U \in \cU} {\rm diam} (U, \rho).$$
In terms of this quantity, one can also give an equivalent definition of conditional metric mean dimension by considering the function $\cF(\Gamma) \to \bR$ sending $F$ to
$$\log \max_{y \in Y} \min_{{\rm mesh}(\cU_y, \rho_F)< \varepsilon} |\cU_y|$$
for $\cU_y$ ranging over all finite open covers of $\pi^{-1}(y)$.
It is easy to check that this function satisfies the conditions of Lemma \ref{OW}.
\end{remark}

\begin{proposition}
Let $ X$ be a $G$-extension of $ Y$ for some compact metrizable space $G$. Suppose that $\rho_X$ and $\rho_G$ are two compatible metrics on $X$ and $G$ respectively such that
$$\rho_X(xg, xg') = \rho_G(g, g')$$
for all $x \in X$ and $g, g' \in G$. Then
$$\mdim_\rM(X|Y, \rho_X) = \mdim_\rM(G, \rho_G).$$
\end{proposition}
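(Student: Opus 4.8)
The plan is to prove the two inequalities $\mdim_\rM(X|Y,\rho_X) \leq \mdim_\rM(G,\rho_G)$ and $\mdim_\rM(X|Y,\rho_X) \geq \mdim_\rM(G,\rho_G)$ by comparing, for each $F \in \cF(\Gamma)$ and each $\varepsilon > 0$, the separation numbers $N_\varepsilon(X|Y,\rho_{X,F})$ and $N_\varepsilon(G,\rho_{G,F})$ directly. The key structural observation is that the isometry hypothesis $\rho_X(xg,xg') = \rho_G(g,g')$ interacts well with the $\Gamma$-action: using condition (3) of Definition \ref{G-extension}, for any $s \in \Gamma$ we have $\rho_X(s(xg),s(xg')) = \rho_X((sx)(sg),(sx)(sg')) = \rho_G(sg,sg')$, and hence taking the maximum over $s \in F$ gives $\rho_{X,F}(xg,xg') = \rho_{G,F}(g,g')$ for all $x \in X$ and $g,g' \in G$. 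This single identity is the engine of the whole proof.

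First I would establish $N_\varepsilon(X|Y,\rho_{X,F}) = N_\varepsilon(G,\rho_{G,F})$ for every $\varepsilon > 0$ and $F \in \cF(\Gamma)$. Recall $N_\varepsilon(X|Y,\rho_{X,F}) = \max_{y \in Y} N_\varepsilon(\pi^{-1}(y),\rho_{X,F})$. Fix $y \in Y$ and pick any $x_0 \in \pi^{-1}(y)$; by condition (1) the map $\varphi_{x_0}\colon G \to \pi^{-1}(y)$, $g \mapsto x_0 g$, is a bijection (surjective by (1), injective by (2)), and by the displayed identity it satisfies $\rho_{X,F}(\varphi_{x_0}(g),\varphi_{x_0}(g')) = \rho_{G,F}(g,g')$, i.e.\ it is an isometry from $(G,\rho_{G,F})$ onto $(\pi^{-1}(y),\rho_{X,F})$. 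An isometric bijection carries $(\rho_{G,F},\varepsilon)$-separating sets to $(\rho_{X,F},\varepsilon)$-separating sets and back, so $N_\varepsilon(\pi^{-1}(y),\rho_{X,F}) = N_\varepsilon(G,\rho_{G,F})$ for every $y$. Taking the maximum over $y \in Y$ (a constant in $y$) gives $N_\varepsilon(X|Y,\rho_{X,F}) = N_\varepsilon(G,\rho_{G,F})$.

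Once this fiberwise identity is in hand, the conclusion is immediate: plugging into the definition,
$$\mdim_\rM(X|Y,\rho_X) = \varliminf_{\varepsilon \to 0} \varlimsup_F \frac{\log N_\varepsilon(X|Y,\rho_{X,F})}{|\log\varepsilon|\,|F|} = \varliminf_{\varepsilon \to 0} \varlimsup_F \frac{\log N_\varepsilon(G,\rho_{G,F})}{|\log\varepsilon|\,|F|} = \mdim_\rM(G,\rho_G),$$
where the last equality is just the definition of metric mean dimension of $\Gamma \curvearrowright (G,\rho_G)$ (the case of a singleton factor). There is no genuine obstacle here; the proof is essentially a bookkeeping exercise once the isometry between $(G,\rho_{G,F})$ and each fiber $(\pi^{-1}(y),\rho_{X,F})$ is identified. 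The only point requiring a little care is checking that the $\Gamma$-equivariance relation $t(xg) = (tx)(tg)$ propagates the single-coordinate isometry hypothesis to the dynamical metrics $\rho_{X,F}$ and $\rho_{G,F}$ simultaneously — but this is exactly the short computation displayed above, and it works verbatim for every finite $F$. (This mirrors the limit-argument structure of Proposition \ref{group extension}, but is cleaner because the exact isometry hypothesis lets us dispense with the $\varepsilon$-$\delta$ uniform-continuity juggling and obtain an equality at every finite stage.)
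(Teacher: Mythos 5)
Your proof is correct and follows essentially the same route as the paper: both identify each fiber $\pi^{-1}(y)$ with $G$ via $g \mapsto x_0 g$, use the equivariance $t(xg)=(tx)(tg)$ to upgrade the isometry hypothesis to the dynamical metrics $\rho_{X,F}$ and $\rho_{G,F}$, and conclude that separating sets correspond, so the separation numbers (and hence the metric mean dimensions) agree. Your version is slightly more explicit than the paper's, stating the fiberwise equality $N_\varepsilon(X|Y,\rho_{X,F})=N_\varepsilon(G,\rho_{G,F})$ outright where the paper splits the conclusion into two inequalities with a brief limit argument, but the underlying idea is identical.
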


\begin{proof}
By definition, a subset $E$ of $\pi^{-1}(y)$ is of a form $x_0G_0$ for some $x_0 \in X$ and $G_0 \subseteq G$. Then $E$ is $(\rho_F, \varepsilon)$-separating subset of $\pi^{-1}(y)$ if and only if $G_0$ is a $(\rho_F, \varepsilon)$-separating subset of $G$.
By a limit argument, we have
$\mdim_\rM(X|Y, \rho_X) \leq \mdim_M(G, \rho_G)$.

To see  the converse of equality, it suffices to notice that if a subset $G_0$ of $G$ is $(\rho_F, \varepsilon)$-separating, then for every $x \in X$,  $xE$ is a $(\rho_F, \varepsilon)$-separating subset of $\pi^{-1}(\pi(x))$.
\end{proof}

\begin{question} \label{comparison}
For a factor map $\pi \colon X \to Y$, is it true that $\mdim(X|Y) \leq \mdim_\rM(X|Y, \rho)$ for every compatible metric $\rho$ on $X$?
\end{question}

\section{Mean dimension given a measure}

In this section, we define the mean dimension given a measure on the factor system and discuss its properties. We start with
a key lemma.
\begin{lemma} \label{USC}
Suppose that $\varphi\colon X \to Y$ is a continuous map between compact metrizable spaces and $\cU$ is a finite open cover of $X$. Then the map $Y \to \bR$ sending $y$ to $\cD(\cU|_{\varphi^{-1}(y)})$ is upper semicontinuous.
\end{lemma}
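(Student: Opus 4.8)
The plan is to show that for each $k \in \bN$, the set $\{y \in Y : \cD(\cU|_{\varphi^{-1}(y)}) \leq k\}$ is open; this is exactly upper semicontinuity of an integer-valued (hence locally bounded) function. So fix $y_0 \in Y$ with $\cD(\cU|_{\varphi^{-1}(y_0)}) \leq k$. By definition of $\cD$ applied to the space $\varphi^{-1}(y_0)$ (with trivial factor), there is a finite open cover $\cW_0$ of $\varphi^{-1}(y_0)$ with $\ord(\cW_0) \leq k$ and $\cW_0$ refining $\cU|_{\varphi^{-1}(y_0)}$. First I would extend each member of $\cW_0$ to an open subset of $X$: since $\varphi^{-1}(y_0)$ is a closed (hence compact) subspace of $X$, each $W \in \cW_0$ has the form $\widetilde W \cap \varphi^{-1}(y_0)$ for some open $\widetilde W \subseteq X$, and by shrinking we may arrange $\widetilde W \subseteq U$ whenever $W \subseteq U \cap \varphi^{-1}(y_0)$ for a chosen $U \in \cU$. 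The collection $\{\widetilde W\}$ together with $X \setminus \varphi^{-1}(y_0)$ is an open cover of $X$; but I only want to control overlaps near the fiber, so I will instead argue as follows.

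The key step is a standard compactness/tube argument. The sets $\widetilde W$ cover the compact set $\varphi^{-1}(y_0)$, so there is an open neighborhood $O$ of $\varphi^{-1}(y_0)$ in $X$ with $O \subseteq \bigcup_W \widetilde W$, and moreover — by choosing the $\widetilde W$ carefully, e.g. via a Lebesgue-number argument for the cover $\{\widetilde W\} \cup \{X\setminus \varphi^{-1}(y_0)\}$ restricted to a neighborhood of the fiber — one can ensure $\ord(\{\widetilde W \cap O\}) \leq k$ as well, since overlap of order $>k$ at a point $x$ would force $x$ to lie in $k+1$ of the sets $\widetilde W$, and the locus where this happens is closed and disjoint from $\varphi^{-1}(y_0)$, so we may remove it from $O$. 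Now by the tube lemma (or simply compactness of $\varphi^{-1}(y_0)$ and continuity of $\varphi$), there is an open neighborhood $N$ of $y_0$ in $Y$ with $\varphi^{-1}(N) \subseteq O$. Then for every $y \in N$, the family $\{\widetilde W \cap \varphi^{-1}(y)\}_W$ is a finite open cover of $\varphi^{-1}(y)$, it refines $\cU|_{\varphi^{-1}(y)}$ (because $\widetilde W \subseteq U$ for the associated $U$, so $\widetilde W \cap \varphi^{-1}(y) \subseteq U \cap \varphi^{-1}(y)$), and its order is at most $\ord(\{\widetilde W \cap O\}) \leq k$. Hence $\cD(\cU|_{\varphi^{-1}(y)}) \leq k$ for all $y \in N$, proving the set in question is open.

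The main obstacle I anticipate is the simultaneous control of two things when passing from the fiber $\varphi^{-1}(y_0)$ to a tube $\varphi^{-1}(N)$: one must keep the refinement property \emph{and} not let the overlap number increase. The refinement is easy to preserve (shrink the $\widetilde W$ into the corresponding $U$), but a priori extending an order-$k$ cover of the fiber to a neighborhood could raise the order. The fix is precisely the observation that the "bad overlap" set $\{x : \sum_W 1_{\widetilde W}(x) \geq k+2\}$ is closed in $X$ and, because $\ord(\cW_0)\le k$ on the fiber, misses $\varphi^{-1}(y_0)$; so intersecting $O$ with its complement gives a neighborhood of the fiber on which the order condition holds, and then the tube lemma does the rest. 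Once this bookkeeping is arranged the proof is routine.
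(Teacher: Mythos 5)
Your skeleton — prove that each sublevel set $\{y:\cD(\cU|_{\varphi^{-1}(y)})\le k\}$ is open, extend an optimal relative cover of the fiber to open sets of $X$ sitting inside members of $\cU$, and pass to a tube $\varphi^{-1}(N)\subseteq O$ using closedness of $\varphi(X\setminus O)$ — is sound and is essentially the paper's route. But the pivotal step, keeping the multiplicity from rising off the fiber, rests on a false claim. Writing $\tilde W$ for your open extensions, the ``bad overlap'' locus $B=\{x\in X:\sum_W 1_{\tilde W}(x)\ge k+2\}$ is a finite union of finite intersections of \emph{open} sets, hence open, not closed; it is disjoint from $\varphi^{-1}(y_0)$, but its closure need not be, so deleting it from $O$ neither leaves an open set nor (after taking interiors) necessarily a neighborhood of the fiber. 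For instance take $X=[-1,1]$, $\varphi(x)=|x|$, $y_0=1/2$, so $\varphi^{-1}(y_0)=\{-1/2,1/2\}$: the relative cover $\{\{-1/2\},\{1/2\}\}$ has order $0$, but the extensions $\tilde W_1=(-1,1/2)$ and $\tilde W_2=(-1/2,1)$ overlap on $(-1/2,1/2)$, which accumulates at both fiber points, so \emph{every} neighborhood of the fiber contains points of multiplicity $2$; for these extensions no choice of $O$ gives $\ord\le 0$ near the fiber. The parenthetical appeal to a Lebesgue-number argument is too vague to supply the missing mechanism.

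The repair is to modify the sets rather than the neighborhood, and this is exactly what the paper does. Extend $\cW_0$ to a finite open cover $\cV=\{V_i\}_{i\in I}$ of $X$ with each $V_i$ contained in some member of $\cU$ and with $\ord(\cV|_{\varphi^{-1}(y_0)})\le k$ (e.g.\ adjoin the sets $U\setminus C$, $U\in\cU$, for a closed neighborhood $C$ of the fiber contained in $\bigcup_W\tilde W$; these miss the fiber, so the order there is unchanged). By normality choose a shrinking $\cV'=\{V_i'\}$ with $\overline{V_i'}\subseteq V_i$. For every $J\subseteq I$ with $|J|\ge k+2$ the set $\bigcap_{j\in J}\overline{V_j'}$ is closed, contained in $\bigcap_{j\in J}V_j$, hence disjoint from $\varphi^{-1}(y_0)$, so its image under $\varphi$ is closed in $Y$ and omits $y_0$; on the complement $N$ of the union of these finitely many images, each fiber $\varphi^{-1}(z)$, $z\in N$, is covered by $\cV'$ with multiplicity at most $k+1$, and $\cV'|_{\varphi^{-1}(z)}$ still refines $\cU|_{\varphi^{-1}(z)}$, giving $\cD(\cU|_{\varphi^{-1}(z)})\le k$. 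With this closed-shrinking step inserted your argument is correct and coincides with the paper's proof; without it, the central step of your proposal is a genuine gap.
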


\begin{proof}
Fix $y \in Y$ and put $\cD(\cU|_{\varphi^{-1}(y)})=d$. By definition, there exists a finite open cover $\cV$ of $X$ such that $\cV|_{\varphi^{-1}(y)} \succ \cU|_{\varphi^{-1}(y)}$ and $\ord(\cV|_{\varphi^{-1}(y)})=d$. Write $\cV$ as $\cV=\{V_i\}_{i \in I}$. Then for any $J \subseteq I$ such that $|J|> d+1$, one has
$$(\cap_{j \in J}V_j)\cap \varphi^{-1}(y)=\emptyset.$$

Since $X$ is normal, there exists a finite open cover $\cV'=\{V_i'\}_{i \in I}$ such that  $\overline{V_i'} \subseteq V_i$ for all $i \in I$ (see \cite[Corollary 1.6.4]{Coornaert15B}). In particular, $\varphi^{-1}(y)$ has the empty intersection with $ \cap_{j \in J} \overline{V_j'}$ for all $J \subseteq I$ such that $|J|> d+1$. Thus we conclude that $y$ sits inside the open set
$$Y \setminus \varphi(\cap_{j \in J} \overline{V_j'})=\{z \in Y: \varphi^{-1}(z) \subseteq (\cap_{j \in J} \overline{V_j'})^c \}$$
for each $J \subseteq I$ with $|J|> d+1$.  That means, as $z$ approaches to $y$, $\varphi^{-1}(z)$ has the empty intersection with $ \cap_{j \in J} V_j'$ for every $J \subseteq I$ with $|J|> d+1$. So by definition, $\cD(\cU|_{\varphi^{-1}(z)}) \leq \ord(\cV'|_{\varphi^{-1}(z)}) \leq d$. This finishes the proof.
\end{proof}

Based on this lemma, we are safe to define the measure-theoretic conditional mean dimension.

\begin{definition}
Denote by $M_\Gamma(Y)$ the collection of $\Gamma$-invariant Borel probability measures on $Y$. For any $\nu \in M_\Gamma(Y)$, set
$$\cD(\cU|\nu):=\int_Y \cD(\cU |_{ \pi^{-1}(y)}) d\nu(y).$$
Note that $\cD(\cU^{Fs} |_{ \pi^{-1}(y)})=\cD(\cU^F |_{ \pi^{-1}(sy)})$ for any $s \in \Gamma$.
It follows that the function $\cF(\Gamma) \cup \{\emptyset\} \to \bR$  sending $F$ to $\cD(\cU^F|\nu)$ satisfies the conditions of Lemma \ref{OW}. We define the {\it mean dimension of $\Gamma \curvearrowright X$ given $\nu$} as
$$\mdim(X|\nu):=\sup_\cU \lim_F \frac{\cD(\cU^F|\nu)}{|F|},$$
for $\cU$ ranging over all finite open covers of $X$.
\end{definition}

\begin{example}
In the same setting of Proposition \ref{generalization}, it is easy to see that
$\mdim(Y\times Z|\nu)=\mdim(Z)$ for any $\nu \in M_\Gamma(Y)$.
\end{example}

Following the similar argument as in the proof of \cite[Lemma 6.8]{Li12} by taking liminf instead, we have
\begin{proposition} \label{inequalities}
Let $\pi: X \to Y$ be a factor map. Then
$$\sup_{\nu \in M_\Gamma(Y)}\mdim(X|\nu) \leq \sup_{y \in Y} \mdim(\pi^{-1}(y), \Gamma).$$
\end{proposition}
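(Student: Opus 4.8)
The plan is to prove the inequality
$$\sup_{\nu\in M_\Gamma(Y)}\mdim(X|\nu)\le \sup_{y\in Y}\mdim(\pi^{-1}(y))$$
by fixing a finite open cover $\cU$ of $X$ and a F{\o}lner sequence $\cF=\{F_n\}$, and comparing, for each $n$, the average $\cD(\cU^{F_n}|\nu)=\int_Y\cD(\cU^{F_n}|_{\pi^{-1}(y)})\,d\nu(y)$ with a suitable supremum of fiber quantities. The crucial geometric input is Lemma~\ref{USC}: the function $y\mapsto\cD(\cU^{F_n}|_{\pi^{-1}(y)})$ is upper semicontinuous, hence Borel, so the integral makes sense, and moreover it attains its maximum on the compact space $Y$. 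Since the action of $\Gamma$ permutes the fibers, for any $s\in\Gamma$ we have $\cD(\cU^{F_ns}|_{\pi^{-1}(y)})=\cD(\cU^{F_n}|_{\pi^{-1}(sy)})$, which is what guarantees the Ornstein--Weiss conditions and the existence of the limit defining $\mdim(X|\nu)$.

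The key step is the subadditivity/quasi-tiling estimate. One cannot directly bound $\cD(\cU^{F_n}|_{\pi^{-1}(y)})$ by a fiber mean dimension evaluated at $y$, because the orbit of $y$ visits many fibers; instead, following the argument of \cite[Lemma~6.8]{Li12}, I would invoke the quasi-tiling Lemma~\ref{quasi-tiling}. Fix $\varepsilon>0$ and $K\in\cF(\Gamma)$; obtain quasitiles $F_1,\dots,F_m\in\cB(K,\varepsilon)$ (each containing $e_\Gamma$) and, for $A=F_n$ with $n$ large, tiling centers $D_1,\dots,D_m$ so that $\{F_jc\}$ are $\varepsilon$-disjoint subsets of $A$ with $|A\setminus\bigcup_jF_jD_j|\le\varepsilon|A|$. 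Using the sub-additivity of $\cD(\cdot|_{\pi^{-1}(y)})$ under joins of covers and the translation-compatibility $r^{-1}\cU^{F_j}$ vs.\ $\cU^{F_jr}$, one gets for each $y$
$$\cD(\cU^{A}|_{\pi^{-1}(y)})\le\sum_{j=1}^m\sum_{r\in D_j}\cD(\cU^{F_j}|_{\pi^{-1}(ry)})+\varepsilon|A|\,\cD(\cU).$$
Then integrate against $\nu$; by $\Gamma$-invariance of $\nu$, $\int_Y\cD(\cU^{F_j}|_{\pi^{-1}(ry)})\,d\nu(y)=\int_Y\cD(\cU^{F_j}|_{\pi^{-1}(y)})\,d\nu(y)\le|F_j|\sup_{y\in Y}\mdim(\pi^{-1}(y))\cdot(1+o(1))$ — more precisely one bounds $\cD(\cU^{F_j}|_{\pi^{-1}(y)})$ pointwise using that $F_j\in\cB(K,\varepsilon)$ together with $\sup_y\varlimsup_n\cD(\cU^{F_n}|_{\pi^{-1}(y)})/|F_n|$; combining with $\sum_j|F_j||D_j|\le|A|/(1-\varepsilon)$ yields
$$\frac{\cD(\cU^A|\nu)}{|A|}\le\frac{1}{1-\varepsilon}\sup_{y\in Y}\sup_{F\in\cB(K,\varepsilon)}\frac{\cD(\cU^F|_{\pi^{-1}(y)})}{|F|}+\varepsilon\,\cD(\cU).$$
Letting $K$ become more invariant and $\varepsilon\to0$ gives $\mdim(X|\nu)\le\sup_{y\in Y}\mdim(\pi^{-1}(y))$ uniformly in $\nu$, whence the supremum over $\nu$.

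The main obstacle I anticipate is the passage from the cover-level quasi-tiling bound to an honest bound by $\sup_{y}\mdim(\pi^{-1}(y))$: the quantity $\sup_{F\in\cB(K,\varepsilon)}\cD(\cU^F|_{\pi^{-1}(y)})/|F|$ need not be close to $\mdim(\pi^{-1}(y))$ for a \emph{fixed} $y$ unless one is careful, since $\mdim(\pi^{-1}(y))$ is defined via a $\varlimsup$ along the specific F{\o}lner sequence $\{F_n\}$ rather than via the net of $(K,\delta)$-invariant sets. The clean way around this is to note that $F\mapsto\sup_{y\in Y}\cD(\cU^F|_{\pi^{-1}(y)})$ satisfies the hypotheses of Lemma~\ref{OW} (monotonicity, subadditivity and $\Gamma$-invariance all survive the supremum over the compact fiber-indexing space, using upper semicontinuity to see the sup is attained), so its normalized limit along \emph{any} F{\o}lner sequence exists and equals $\lim_F\sup_y\cD(\cU^F|_{\pi^{-1}(y)})/|F|$; one then checks this limit is $\le\sup_y\mdim(\pi^{-1}(y))$ by an exchange-of-suprema argument (for each $F$ pick a maximizing $y$, but the maximizer varies — here upper semicontinuity plus a diagonal/compactness argument, exactly as in \cite[Lemma~6.8]{Li12}, does the job). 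Everything else is routine Ornstein--Weiss bookkeeping.
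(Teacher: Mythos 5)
Your reduction does not close: the step you yourself flag as an ``obstacle'' is the whole difficulty, and the fix you sketch does not work. After the quasi-tiling step you must control, for a \emph{fixed} quasitile $F_j\in\cB(K,\varepsilon)$, either the pointwise quantity $\cD(\cU^{F_j}|_{\pi^{-1}(y)})/|F_j|$ or the integrated one $\frac{1}{|F_j|}\int_Y\cD(\cU^{F_j}|_{\pi^{-1}(y)})\,d\nu(y)$. The pointwise bound by $\sup_{z}\mdim(\pi^{-1}(z))+o(1)$ is unjustified: for a fixed fiber the function $F\mapsto\cD(\cU^{F}|_{\pi^{-1}(y)})$ is \emph{not} right-invariant (one has $\cD(\cU^{Fs}|_{\pi^{-1}(y)})=\cD(\cU^{F}|_{\pi^{-1}(sy)})$, a different fiber), so Lemma \ref{OW} does not apply fiberwise, and membership of $F_j$ in $\cB(K,\varepsilon)$ gives no relation between $\cD(\cU^{F_j}|_{\pi^{-1}(y)})/|F_j|$ and the $\varlimsup$ along $\{F_n\}$ defining $\mdim(\pi^{-1}(y))$. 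Your fallback, passing to $h(F):=\sup_y\cD(\cU^F|_{\pi^{-1}(y)})$ (which is indeed OW-admissible), only yields $\mdim(X|\nu)\le\lim_F h(F)/|F|$ --- a bound that already follows trivially from $\cD(\cU^F|\nu)\le h(F)$, so the quasi-tiling buys nothing --- and you are left needing $\lim_n h(F_n)/|F_n|\le\sup_y\varlimsup_n\cD(\cU^{F_n}|_{\pi^{-1}(y)})/|F_n|$. That inequality goes \emph{against} the trivial direction of exchanging $\sup_y$ with the limit (always $\varlimsup_n\sup_y\ge\sup_y\varlimsup_n$), and upper semicontinuity plus a compactness/diagonal argument does not produce it: it is in essence a subadditive variational principle, exactly the input (\cite[Lemma 3.6]{LY}) that the paper reserves for the \emph{equality} in Proposition \ref{equality}, and even there only for abelian $\Gamma$ and tiling F{\o}lner sequences. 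As written, your proof of the stated inequality rests on an unproved claim that is strictly stronger than the proposition itself.

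The inequality of Proposition \ref{inequalities} is much softer, precisely because the measure is fixed and one never needs to pull $\sup_y$ inside a limit; this is the argument of \cite[Lemma 6.8]{Li12} that the paper appeals to. Fix $\cU$ and set $g_n(y):=\cD(\cU^{F_n}|_{\pi^{-1}(y)})$. By Lemma \ref{USC} each $g_n$ is upper semicontinuous, hence Borel, and $g_n(y)\le|F_n|\,\cD(\cU)$, since $\cD$ restricted to a closed subset is subadditive under joins and $\cD\bigl((s^{-1}\cU)|_{\pi^{-1}(y)}\bigr)=\cD(\cU|_{\pi^{-1}(sy)})\le\cD(\cU)$. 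Because the Ornstein--Weiss limit $\lim_F\cD(\cU^F|\nu)/|F|$ exists, it equals $\lim_n\frac{1}{|F_n|}\int_Y g_n\,d\nu$ along the F{\o}lner sequence used to define the fiber mean dimension; the reverse Fatou lemma (with the uniform bound $\cD(\cU)$) then gives $\lim_F\frac{\cD(\cU^F|\nu)}{|F|}\le\int_Y\varlimsup_n\frac{g_n(y)}{|F_n|}\,d\nu(y)\le\sup_{y\in Y}\mdim(\pi^{-1}(y))$. Taking the supremum over $\cU$ and then over $\nu$ finishes the proof; no quasi-tiling is needed for this direction.
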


Combining Proposition \ref{fiber as lower bound} with \ref{inequalities}, we see that conditional mean dimension given a measure serves as
a lower bound of conditional mean  dimension.

Recall that a finite subset $T$ of $\Gamma$ is called a {\it tile} if there exists a subset $C$ of $\Gamma$ such that $\{Tc\}_{c \in C}$ makes a partition of $\Gamma$. A F{\o}lner sequence $F_n$'s is called a {\it tiling F{\o}lner sequence} if each $F_n$  is a tile.  It is well known that all elementary amenable groups including abelian groups admit a tiling F{\o}lner sequence \cite{Weiss01}.
\begin{proposition} \label{equality}
When $\Gamma$ is an abelian group, we have
$$\sup_{\nu \in M_\Gamma(Y)}\mdim(X|\nu) = \sup_{y \in Y} \mdim(\pi^{-1}(y), \Gamma),$$
where $\mdim(\pi^{-1}(y), \Gamma)$ is defined along a tiling F{\o}lner sequence of $\Gamma$.
\end{proposition}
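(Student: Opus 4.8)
The plan is to prove the nontrivial inequality $\sup_{\nu} \mdim(X|\nu) \geq \sup_{y} \mdim(\pi^{-1}(y))$, since the reverse inequality is Proposition \ref{inequalities}. Fix $y_0 \in Y$ and a finite open cover $\cU$ of $X$; it suffices to produce, for each $\varepsilon > 0$, a measure $\nu \in M_\Gamma(Y)$ with $\mdim(X|\nu) \geq \varlimsup_n \cD(\cU^{F_n}|_{\pi^{-1}(y_0)})/|F_n| - \varepsilon$, where $\{F_n\}$ is the fixed tiling F\o lner sequence. The heart of the matter is to transfer the high-dimensional behavior of the single fiber $\pi^{-1}(y_0)$ over the window $F_n$ into a genuine $\Gamma$-invariant measure, and this is exactly where the \emph{tile} hypothesis on $F_n$ enters: since $F_n$ is a tile, there is $C_n \subseteq \Gamma$ with $\{F_n c\}_{c \in C_n}$ partitioning $\Gamma$, which lets us build an averaging scheme whose ``cells'' are translates of $F_n$ and hence carry exactly the cover-refinement data of $\cU^{F_n}$.

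The key steps, in order. First, I would fix $n$ and pass to the function $\psi_n(F) := \cD(\cU^F|_{\pi^{-1}(y_0)})$ restricted to sets of the form (unions of) translates of $F_n$; the superadditivity-type control we want comes from Lemma \ref{OW} applied to $F \mapsto \cD(\cU^F|\nu)$ once $\nu$ is chosen, combined with the subadditive estimate $\cD(\cU^{E \cup F}|_K) \le \cD(\cU^E|_K) + \cD(\cU^F|_K)$. Second — the main construction — using the tiling centers $C_n$ of $F_n$, consider the empirical measures $\mu_N := \frac{1}{|C_n \cap B_N|}\sum_{c \in C_n \cap B_N} \delta_{c y_0}$ (for a F\o lner exhaustion $B_N$ of $\Gamma$) on $Y$, and let $\nu$ be a weak-$*$ limit point along a subnet; a standard argument shows $\nu \in M_\Gamma(Y)$. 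Third, I would estimate $\cD(\cU^{F_n \cdot (C_n \cap B_N)}|\nu)$ from below: by Lemma \ref{USC}, $y \mapsto \cD(\cU^{F_n}|_{\pi^{-1}(y)})$ is upper semicontinuous, so $\int \cD(\cU^{F_n}|_{\pi^{-1}(y)})\, d\nu(y) \geq \limsup_N \frac{1}{|C_n \cap B_N|}\sum_{c} \cD(\cU^{F_n}|_{\pi^{-1}(cy_0)})$ — wait, one must be careful about the direction of the inequality for u.s.c. functions; the correct move is to use $\cD(\cU^{F_n}|\nu) = \cD(\cU^{F_n}|_{\pi^{-1}(\cdot)})$ integrated, bound it \emph{below} by noting the integrand at each $cy_0$ equals $\cD(\cU^{F_n}|_{\pi^{-1}(y_0)})$ after applying the $\Gamma$-shift identity $\cD(\cU^{F_n}|_{\pi^{-1}(cy_0)}) = \cD((c^{-1}\cU^{F_n})|_{\pi^{-1}(y_0)})$ and absorbing the translate into the cover (here one enlarges $\cU$ to a $\Gamma$-finite family or works with a cofinal sequence of covers so that $c^{-1}\cU^{F_n}$ is again of the controlled form). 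Fourth, divide by $|F_n|$ and use the Ornstein--Weiss limit (Lemma \ref{OW}) for the function $F \mapsto \cD(\cU^F|\nu)$ together with the fact that $\{F_n c : c \in C_n\}$ tiles $\Gamma$ to conclude $\mdim(X|\nu) \geq \lim_F \cD(\cU^F|\nu)/|F| \geq \cD(\cU^{F_n}|_{\pi^{-1}(y_0)})/|F_n|$; then let $n \to \infty$ and take the supremum over $\cU$.

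The main obstacle I anticipate is the u.s.c. direction issue in Step 3: upper semicontinuity of $y \mapsto \cD(\cU^{F_n}|_{\pi^{-1}(y)})$ gives $\int \,d\nu \geq \limsup$ only when the approximating points accumulate onto the mass of $\nu$ from ``above'' — concretely, because $\nu$ is a limit of atomic measures supported on the orbit of $y_0$ and u.s.c. functions satisfy $\int f\, d\nu \geq \limsup_N \int f\, d\mu_N$ only if we have the Portmanteau inequality in the \emph{right} direction, which for u.s.c. $f$ reads $\int f\, d\nu \geq \limsup_N \int f\, d\mu_N$ — so in fact this is the favorable direction, and the real subtlety is instead ensuring the values $\cD(\cU^{F_n}|_{\pi^{-1}(cy_0)})$ do not \emph{drop} below $\cD(\cU^{F_n}|_{\pi^{-1}(y_0)})$, which one fixes by replacing $\cU$ throughout the argument by the $\Gamma$-translates it generates over $F_n$ or, more cleanly, by following the proof of \cite[Lemma 6.8]{Li12} referenced for Proposition \ref{inequalities} and running it in reverse with the tiling structure supplying the missing invariance. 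I would model the bookkeeping closely on \cite[Lemma 6.8]{Li12} and on Weiss's treatment of tiling F\o lner sequences \cite{Weiss01}.
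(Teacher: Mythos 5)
Your plan is essentially an attempt to reprove by hand the special case of \cite[Lemma 3.6]{LY} that the paper simply invokes: the paper sets $f_F(z)=\cD(\cU^F|_{\pi^{-1}(z)})$, notes that each $f_F$ is upper semicontinuous by Lemma \ref{USC}, and applies that lemma to the Dirac measures at $y$ to obtain a single $\mu\in M_\Gamma(Y)$ with $\varlimsup_n f_{F_n}(y)/|F_n|\le \varlimsup_n \int_Y f_{F_n}/|F_n|\,d\mu$, after which the proposition follows by taking suprema. Measured against what such an argument must deliver, your sketch has two genuine gaps. First, the measure you construct is not invariant: averaging Dirac masses over the tiling centers, $\mu_N=\frac{1}{|C_n\cap B_N|}\sum_{c\in C_n\cap B_N}\delta_{cy_0}$, gives no asymptotic invariance, because $sC_n$ and $C_n$ are essentially disjoint for $s\ne e_\Gamma$; the ``standard argument'' works only when one averages over F{\o}lner sets. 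Concretely, for $\Gamma=\bZ$ acting on $Y=\bZ/n\bZ$ by translation, with $y_0=0$, $F_n=\{0,\dots,n-1\}$ and $C_n=n\bZ$, every $\mu_N$ equals $\delta_0$, whose limit is not invariant. Since both membership in $M_\Gamma(Y)$ and the definition of $\mdim(X|\nu)$ (via Lemma \ref{OW}, which needs $\cD(\cU^{Fs}|\nu)=\cD(\cU^{F}|\nu)$) require invariance, your Step 4 collapses as written.

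Second, the estimate you single out as the ``real subtlety''---that $\cD(\cU^{F_n}|_{\pi^{-1}(cy_0)})$ should not drop below $\cD(\cU^{F_n}|_{\pi^{-1}(y_0)})$---is false in general (fibers over translates of $y_0$ can be much smaller) and is not what the proof needs; neither enlarging $\cU$ by translates nor ``running \cite[Lemma 6.8]{Li12} in reverse'' addresses it. The correct mechanism runs in the opposite direction: using the identity $\cD(\cU^{F_nc}|_{\pi^{-1}(y_0)})=f_{F_n}(cy_0)$, sub-additivity and the tiling give $f_{F_N}(y_0)\le\sum_{c\in C_n\cap F_N}f_{F_n}(cy_0)+o(|F_N|)$, so the normalized big-window value at $y_0$ is dominated by an orbit average of small-window values, and only then do upper semicontinuity and the Portmanteau inequality enter. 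Moreover, your final step needs more than a lower bound on the single value $\cD(\cU^{F_n}|\nu)/|F_n|$: the Ornstein--Weiss limit $\lim_F\cD(\cU^F|\nu)/|F|$ is not bounded below by the value at one particular $F_n$ (sub-additivity only gives upper, not lower, bounds over tiles), so one must produce one invariant measure for which the orbit-average bound holds for all $n$ simultaneously. That is exactly what averaging $\delta_{y_0}$ over F{\o}lner sets, averaging the tiling over its translates (this is where commutativity and the tiling F{\o}lner sequence are really used), and a diagonal argument---i.e.\ \cite[Lemma 3.6]{LY}---provide.
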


\begin{proof}
From Proposition \ref{inequalities},  We need only to prove the for every $y \in Y$ there exists a $\mu \in M_\Gamma(Y)$ such that $\mdim(\pi^{-1}(y), \Gamma) \leq \mdim(X|\mu)$.  

Fix a finite open cover $\cU$ of $X$. For every $F \in \cF(\Gamma)$ and $z\in Y$, set $f_F(z)=\cD(\cU^F|_{ \pi^{-1}(z)})$. By Lemma \ref{USC},  $f_F$ is upper semicontinuous. Note that \cite[Lemma 3.6]{LY}  holds when every $f_F$ is upper semicontinuous. Pick a cluster point of the measures $\frac{1}{|F_n|}\sum_{s \in F_n} \delta_{sy}$ under the weak*-topology, we have $\mu \in M_{\Gamma}(Y)$. Applying \cite[Lemma 3.6]{LY} to the measures $\nu_n=\delta_y$, it follows that
$$\varliminf_{n \to \infty} \frac{f_{F_n}(y)}{|F_n|} = \varliminf_{n \to \infty} \int_Y \frac{f_{F_n}}{|F_n|} d\nu_n \leq \lim_{n \to \infty} \int_Y \frac{f_{F_n}}{|F_n|} d\mu.$$
Therefore,
\begin{align*}
    \mdim(\pi^{-1}(y), \Gamma) &=\sup_{\cU} \varliminf_{n \to \infty} \frac{f_{F_n}(y)}{|F_n|}\\
    &\leq \sup_{\cU}  \lim_{n \to \infty} \int_Y \frac{f_{F_n}}{|F_n|} d\mu=\mdim(X|\mu). 
\end{align*}
Since $y$ is arbitrary, this finishes the proof.

\end{proof}


  \end{document}